\numberwithin{equation}{section}
\newtheorem{Proposition}[equation]{Proposition}
\newtheorem{Lemma}[equation]{Lemma}
\newtheorem{Theorem}[equation]{Theorem}
\newtheorem{Corollary}[equation]{Corollary}
\theoremstyle{definition}
\newtheorem{Remark}[equation]{Remark}
\def\Tab{{\operatorname{Tab}}}
\def\Row{{\operatorname{Row}}}
\def\sTab{{\operatorname{sTab}}}
\def\Irr{{\operatorname{Irr}}}
\def\iso{\cong}
\def\into{\hookrightarrow}
\def\onto{\twoheadrightarrow}
\def\isoto{\stackrel{\sim}{\to}}
\def\sub{\subseteq}
\def\lan{\langle}
\def\ran{\rangle}
\def\bar{\overline}
\def\ad{{\operatorname{ad}\,}}
\def\ne{{\operatorname{ne}}}
\def\Ann{\operatorname{Ann}}
\def\RS{\operatorname{RS}}
\def\Wh{\operatorname{Wh}}
\def\C{{\mathbb C}}
\def\R{{\mathbb R}}
\def\Z{{\mathbb Z}}
\def\bp{{\mathbf p}}
\def\op{\operatorname{op}}
\def\Prim{\operatorname{Prim}}
\def\col{\operatorname{col}}
\def\row{\operatorname{row}}
\def\End{{\operatorname{End}}}
\def\GL{\operatorname{GL}}
\def\Sp{\operatorname{Sp}}
\def\SO{\operatorname{SO}}
\def\a{\mathfrak a}
\def\b{\mathfrak b}
\def\g{\mathfrak g}
\def\h{\mathfrak h}
\def\k{\mathfrak k}
\def\l{\mathfrak l}
\def\m{\mathfrak m}
\def\n{\mathfrak n}
\def\p{\mathfrak p}
\def\q{\mathfrak q}
\def\s{\mathfrak s}
\def\t{\mathfrak t}
\def\gl{\mathfrak{gl}}
\def\sl{\mathfrak{sl}}
\def\so{\mathfrak{so}}
\def\sp{\mathfrak{sp}}
\def\cO{\mathcal O}
\def\cL{\mathcal L}
\def\cS{\mathcal S}
\def\cX{\mathcal X}
\def\word{{\operatorname{word}}}
\def\eps{\epsilon}
\title{On changing highest weight theories for finite $W$-algebras}
\author{Jonathan Brown and Simon M.~Goodwin}
\address{School of Mathematics, University of Birmingham, Birmingham, B15 3LX,~UK}
\email{brownjs@for.mat.bham.ac.uk, goodwin@for.mat.bham.ac.uk}
\thanks{2010 {\it Mathematics Subject Classification}:  17B10,
81R05.}
\begin{document}

\begin{abstract}
A highest weight theory for a finite $W$-algebra $U(\g,e)$ was developed in \cite{BGK}.
This leads to a strategy for classifying the irreducible finite dimensional
$U(\g,e)$-modules.
The highest weight theory depends
on the choice of a parabolic subalgebra of $\g$ leading to
different parameterizations of the finite dimensional irreducible $U(\g,e)$-modules.
We explain how to construct an isomorphism preserving bijection between the
parameterizing sets
for different choices of parabolic subalgebra when $\g$ is of type
$A$, or when $\g$ is of types C or D and $e$ is an even multiplicity
nilpotent element.
\end{abstract}

\maketitle

\section{Introduction}

Let $U(\g,e)$ be the finite $W$-algebra associated to the nilpotent
element $e$ in a reductive Lie algebra $\g$ over $\C$.  Finite
$W$-algebras were introduced to the mathematical literature by
Premet in \cite{Pr1} and have subsequently attracted a lot of
interest, see for example the recent survey \cite{Lo4}.  In
\cite{BGK} a highest weight theory for $U(\g,e)$ is developed. The
key theorem required for the highest weight theory, \cite[Theorem
4.3]{BGK}, says that there is a subquotient of $U(\g,e)$ isomorphic
to $U(\g_0,e)$, where $\g_0$ is a minimal Levi subalgebra of $\g$
containing $e$. This allows a definition of Verma modules by
inducing finite dimensional irreducible $U(\g_0,e)$-modules. These
Verma modules have irreducible heads and all finite dimensional
irreducible $U(\g,e)$-modules can be realized in this manner.

At present the classification of finite dimensional irreducible $U(\g,e)$-modules
is unknown, except in some special cases.
In \cite{BK} Brundan and Kleshchev
classified these modules in the case that
$\g$ is of type A.  In \cite{Bro2} the first author found the classification in
the case that $\g$ is classical and $e$ is a {\em rectangular} nilpotent.
In \cite{BroG} the authors classified the finite dimensional
irreducible $U(\g,e)$-modules with integral central character
in the case that $\g$ is classical and $e$ is an {\em even multiplicity} nilpotent.
All of these classifications can be stated nicely in terms of the highest weight theory.

One particular feature of this highest weight theory is that it  requires the
choice of $\q$, a parabolic subalgebra of $\g$ which contains $\g_0$ as a Levi subalgebra.
For a finite dimensional irreducible $U(\g_0,e)$-module $V$,
we denote the Verma module corresponding to $V$ and $\q$ by
$M(V,\q)$ and write $L(V,\q)$ for its irreducible head.
Let $\q, \q'$ be two parabolic subalgebras of $\g$ containing $\g_0$ as a Levi subalgebra,
and let $V$, $V'$ be two finite dimensional irreducible $U(\g_0,e)$-modules.
It is a natural to ask: when is $L(V,\q) \iso L(V',\q')$?
The main purpose of this note is to answer this question
for the cases where the classification of finite dimensional irreducible
$U(\g,e)$-modules is known.

For the cases that we consider $e$ is of standard Levi type, so by a
result of Kostant in \cite[Section 2]{Ko}, we have that $U(\g_0,e)$ is
isomorphic to $S(\t)^{W_0}$, where $\t$ is a maximal toral
subalgebra of $\g_0$ and $W_0$ is the Weyl group of $\g_0$ with
respect to $\t$.  This isomorphism leads to a nice description of
finite dimensional irreducible modules for $U(\g_0,e)$ in terms of
{\em tables} associated to $e$, as explained in Sections \ref{S:A}
and \ref{S:evenchange}.
Our main results are Theorems \ref{T:hwA} and
\ref{T:changehwevenmult}, which give a combinatorial explanation of
how to pass from a table parameterizing a finite dimensional
irreducible $U(\g,e)$-module corresponding to a choice of parabolic
subalgebra $\q$ to one corresponding to a different choice of
parabolic subalgebra $\q'$. This combinatorics is given by the row
swapping operations on tables defined in \cite[Section 4]{BroG}.

The proofs of our main results depend crucially on the relationship
between finite dimensional $U(\g,e)$ modules and primitive ideals of
$U(\g)$ with associated variety $\bar{G \cdot e}$ proved by
Losev in \cite{Lo1} and \cite{Lo2}; this is discussed in
\S\ref{ss:losev}. A connection between modules for $U(\g,e)$ and
certain Whittaker modules for $U(\g)$ predicted in \cite[Conjecture
5.3]{BGK} and verified by \cite[Theorem 4.1]{Lo3} and
\cite[Proposition 3.12]{BroG} is also of importance; this is
explained in \S\ref{ss:hwthy}.   We make vital use of a theorem of
Joseph, \cite[Th\'eor\`eme 1]{Jo1}, which, in the case $\g$ is of
type $A$, tells us when two weights of $\t$ correspond to the same
primitive ideal. Other important ingredients are the notion of
``Levi subalgebras'' of $U(\g,e)$ established in \cite[Section
3]{BroG}, and the description of the component group action for the
case of rectangular nilpotent elements from \cite[Theorem
1.3]{Bro2}.

We now give a brief outline of the structure of this paper. In
Section \ref{S:recap}, we give a recollection of the theory of
finite $W$-algebras that we require later in the paper.  We prove
two general results about changing highest weight theories in
Section \ref{S:genchange}.  The main content of the paper is
Sections \ref{S:A} and \ref{S:evenchange}, in which we prove
Theorems \ref{T:hwA} and \ref{T:changehwevenmult}.  In both of these
sections we explain how {\em tables} are used to describe the
highest weight theory and the combinatorics of tables required for
changing between different highest weight theories.

\section{Review of finite $W$-algebras} \label{S:recap}

Throughout this paper we work over the field of complex numbers
$\C$; though all of our results remain valid over any algebraically
closed field of characteristic $0$. As a convention throughout this
paper, by a ``module'' we mean a finitely generated left module.

In this paper, we often consider twisted modules.  Let $A$ be an
algebra and $G$ a group that acts on $A$.  Given an $A$-module $M$
and $g \in G$, the twisted module $g \cdot M$ is equal to $M$ as a
vector space with action defined by ``$am = (g^{-1} \cdot a)m$'' for
$a \in A$ and $m \in M$.

\subsection{Definition of $U(\g,e)$} \label{ss:Wdef}
Below we recall the definition of $U(\g,e)$ via nonlinear Lie
algebras; we refer the reader to \cite[\S2.2]{BGK} for more details.

Let $G$ be a connected reductive algebraic group over $\C$; also let
$\tilde G$ be a possibly disconnected algebraic group with identity
component equal to $G$. Let $\g$ be the Lie algebra of $G$, and let
$e \in \g$ be a nilpotent element of $\g$.  Let $(\cdot|\cdot)$ be a
non-degenerate symmetric invariant bilinear form on $\g$, and define
$\chi \in \g^*$ by $\chi(x) = (e|x)$.

Given a subgroup $A$ of $G$ with Lie algebra $\a$ and $x \in \g$, we
write $A^x$ for the centralizer of $x$ in $A$ and $\a^x$ for the
centralizer of $x$ in $\a$. For $g \in G$ and $x \in \g$, we write
$g \cdot x$ for the image of $x$ under the adjoint action of $g$.

Fix an $\sl_2$-triple $(e,h,f)$ in $\g$.  We choose a maximal toral
subalgebra $\t$ of $\g$ such that $h \in \t$ and $\t^e$ is a maximal
toral subalgebra of $\g^e$. We write $\lan \cdot , \cdot \ran : \t^*
\times \t \to \C$ for the pairing between $\t^*$ and $\t$.  Let
$\Phi \sub \t^*$ be the root system of $\g$ with respect to $\t$.
Given $\alpha \in \Phi$, we write $\alpha^\vee \in \t$ for the
corresponding coroot.
Let $W$ be the Weyl group of $\g$ with
respect to $\t$.

Let
\[
  \g = \bigoplus_{i \in \Z} \g(i),
\]
be a {\em good grading} for grading for $e$ compatible with $\t$, i.e. $e \in \g(2)$,
$\g^e \sub \bigoplus_{j \geq 0} \g(j)$ and $\t \sub \g(0)$.
Good gradings for $e$ are classified in \cite{EK}; see also
\cite{BruG}. The standard example of a good grading is the {\em
Dynkin grading}, which given by $\g(i) = \{ x \in \g \mid [h,x] = i
x \}$.  The good grading is given by the $\ad h'$-eigenspace
decomposition for some $h' \in \g$; this follows from the fact that
all derivations of the derived subalgebra of $\g$ are inner.  By
\cite[Lemma 19]{BruG}, we have $h' - h \in \t^e$.

We define the following subspaces of $\g$
$$
\p = \bigoplus_{j \geq 0} \g(j), \qquad\n = \bigoplus_{j < 0} \g(j),
\qquad\h = \g(0), \qquad\k =
\g(-1).
$$
In particular, $\p$ is a parabolic subalgebra of $\g$ with Levi
factor $\h$ and $\n$ is the nilradical of the opposite parabolic.

We define a symplectic form $\lan \cdot|\cdot \ran$ on $\k$ by $\lan
x|y \ran = \chi([y,x])$. Let $ \k^{\ne} = \{x^{\ne}\mid x \in \k\}$
be a ``neutral'' copy of $\k$. We write $x^{\ne} = x(-1)^\ne$ for
any element $x \in \g$.  Now make $\k^{\ne}$ into a non-linear Lie
algebra with non-linear Lie bracket defined by $[x^\ne,y^\ne] = \lan
x|y \ran$ for $x, y \in \k$. Note that $U(\k^\ne)$ is isomorphic to
the Weyl algebra associated to $\k$ and the form $\lan \cdot|\cdot
\ran$.

We view $ \tilde{\g} = \g \oplus \k^{\ne} $ as a non-linear Lie
algebra with bracket obtained by extending the brackets already
defined on $\g$ and $\k^\ne$ to all of $\tilde{\g}$, and declaring
$[x, y^{\ne}] = 0$ for $x \in \g, y \in \k$. Then $U(\tilde{\g})\iso
U(\g) \otimes U(\k^{\ne})$. Also let $\tilde{\p} = \p \oplus
\k^{\ne}$; this is a subalgebra of $\tilde \g$ whose universal
enveloping algebra is identified with $U(\p) \otimes U(\k^{\ne})$.

We define $\tilde \n_\chi = \{x - x^\ne - \chi(x) \mid x \in \n\}$.
By the PBW theorem for $U(\tilde \g)$ we have a direct sum
decomposition $U(\tilde \g) = U(\tilde \p) \oplus U(\tilde \g)\tilde
\n_\chi$. We write $\Pr : U(\tilde \g) \to U(\tilde \p)$ for the
projection along this direct sum decomposition. We define the {\em
finite $W$-algebra}
$$
U(\g,e) = U(\tilde \p)^\n = \{u \in U(\tilde \p) \mid
\Pr([x-x^{\ne},u]) = 0 \text{ for all } x \in \n\}.
$$
It is a subalgebra of $U(\tilde \p)$ by \cite[Theorem 2.4]{BGK}.

We write $\Irr_0 U(\g,e)$ for the set of isomorphism classes of
finite dimensional irreducible $U(\g,e)$-modules.  For a finite
dimensional irreducible $U(\g,e)$-module $L$, we denote its
isomorphism class by $[L] \in \Irr_0U(\g,e)$.

\subsection{Central characters} \label{ss:centchar}

Let $Z(\g)$ denote the center of $U(\g)$ and $Z(\g,e)$ denote the
center of $U(\g,e)$. It is easy to see that the restriction of the
linear map $\Pr : U(\tilde \g) \to U(\tilde \p)$ defines an
injective algebra homomorphism $\Pr: Z(\g) \into Z(\g,e)$. As
explained in the footnote to \cite[Question 5.1]{Pr2}, this map is
also surjective, so it is an algebra isomorphism
$$
\Pr:Z(\g) \isoto Z(\g,e).
$$
We view $Z(\g)$ as a subalgebra of $U(\g,e)$-module via $\Pr$. Given
a $U(\g,e)$-module $V$, we say $V$ is of {\em central character}
$\psi:Z(\g) \to \C$ if $zv = \psi(z) v$ for all $z \in Z(\g)$ and $v
\in V$.

\subsection{The component group action} \label{ss:compgrp}

We write $H = G(0)$ for the Levi subgroup of $G$
with Lie algebra $\h = \g(0)$, so $H = G^{h'}$ (recall that the
good grading of $\g$ is the $\ad h'$-eigenspace decomposition).
The argument in the proof of \cite[Proposition 5.9]{Ja} shows that
the component group of the centralizer of $e$ in $G$, denoted by
$C(e) = G^e/(G^e)^\circ$, is naturally isomorphic to
$H^e/(H^e)^\circ$. From now on we identify $C(e) = H^e/(H^e)^\circ$.

One can check that the adjoint action of $H^e$ on $\g$ gives rise to
a well-defined action of $H^e$ on $U(\g,e)$.  It was proved by
Premet in \cite[\S2.5]{Pr2} that there is an embedding
\begin{equation} \label{e:theta}
\theta : \h^e \into U(\g,e);
\end{equation}
see also \cite[Theorem 3.3]{BGK}. Moreover, the adjoint action of
$\h^e$ on $U(\g,e)$ through this embedding coincides with
differential of the action of $H^e$ on $U(\g,e)$.

We write $\Prim_0U(\g,e)$ for the set of primitive ideals of
$U(\g,e)$ of finite codimension. The set $\Prim_0 U(\g,e)$
identifies naturally with $\Irr_0U(\g,e)$. The action of $H^e$ on
$U(\g,e)$ induces an action on $\Prim_0U(\g,e)$. Since the action of
$\h^e$ of $U(\g,e)$ coincides with the differential of the action of
$H^e$, we see that the action of $H^e$ on $\Prim_0 U(\g,e)$ factors
through $C(e)$. So we obtain an action of $C(e)$ on $\Prim_0
U(\g,e)$, and thus on $\Irr_0U(\g,e)$.

Next we note this action can also be described in terms of twisting
the action of $U(\g,e)$ on its finite dimensional irreducible
modules by elements of $C(e)$.  Let $c \in C(e)$ and $\dot c \in
H^e$ be a lift of $c$, and let $L$ be finite dimensional irreducible
$U(\g,e)$-module.    Up to isomorphism $\dot c \cdot L$  only
depends on $c$, and we define
\begin{equation} \label{e:compgp}
c \cdot [L] = [\dot c \cdot L].
\end{equation}
It is straightforward
to see that the actions of $C(e)$ on isomorphism classes of finite
dimensional irreducible $U(\g,e)$-modules via twisting
and via
the action of $C(e)$ on primitive ideals are the same.

Now let $\tilde H = \tilde G^{h'}$ be the centralizer of $h'$ in
$\tilde G$, and let $\tilde H^e$ be the centralizer of $e$ in $\tilde H$.
Then $\tilde H^e$ also acts on $U(\g,e)$.  The content of the
previous two paragraphs remains valid if we replace $C(e)$ with $\tilde C(e) = \tilde
H^e/(\tilde H^e)^\circ$.

\subsection{Skryabin's equivalence}
\label{ss:skry}

Skryabin's equivalence relates
the category $U(\g,e)\-mod$ of finitely generated $U(\g,e)$-modules
to a certain category of generalized Whittaker modules for $U(\g)$.
To state this equivalence, we require the Whittaker model definition
of $U(\g,e)$, which is outlined below.

Let $\l$ be a Lagrangian subspace of $\k$ with respect to the
symplectic form $\lan \cdot | \cdot \ran$. Then define $\m =
\bigoplus_{j \le -2} \g(j) \oplus \l$ and $\m_\chi = \{x - \chi(x)
\mid x \in \m\} \sub U(\g)$.  Then $Q_\chi \iso U(\g)/U(\g)\m_\chi$
is a left $U(\g)$-module.  There is a natural isomorphism
$\End_{U(\g)}(Q_\chi)^{\op} \isoto U(\g,e)$, by \cite[Theorem
2.4]{BGK}. The algebra $\End_{U(\g)}(Q_\chi)^{\op}$ is the Whittaker
model definition of the finite $W$-algebra associated to $\g$ and
$e$.

Now $Q_\chi$ is naturally a right
$\End_{U(\g)}(Q_\chi)^{\op}$-module and thus can be viewed as a
right $U(\g,e)$-module. Therefore, we can define the $U(\g)$-module
$\cS(M) = Q_\chi \otimes_{U(\g,e)} M$ for $M \in U(\g,e)\-mod$. Let
$\Wh(\g,\m_\chi)$ be the category of finitely generated
$U(\g)$-modules on which $\m_\chi$ acts locally nilpotently.  For $M
\in U(\g,e)\-mod$ it is easy to check that $\cS(M) \in
\Wh(\g,\m_\chi)$. Skryabin's equivalence from \cite{Sk} says that
the functor
$$
\cS : U(\g,e)\-mod \to \Wh(\g,\m_\chi)
$$
is an equivalence of categories.  A quasi-inverse is given by the
functor
$$
N \mapsto N^{\m_\chi} = \{n \in N \mid xn = \chi(x)n \text{ for all
} x \in \m\}
$$
for $N \in \Wh(\g,\m_\chi)$.

\subsection{Losev's map between ideals} \label{ss:losev}

In \cite{Lo1} Losev constructs a map $\cdot ^\dagger$ from the set
of ideals of $U(\g,e)$ to the set of ideals of $U(\g)$.  By
\cite[Theorem 1.2.2]{Lo1}, this map restricts to a
surjection
\begin{equation} \label{e:dagger}
I \mapsto I^\dagger : \Prim_0 U(\g,e) \twoheadrightarrow \Prim_e
U(\g),
\end{equation}
where
$\Prim_{e}U(\g)$ denotes the primitive ideals of $U(\g)$ with
associated variety equal to $\bar{G \cdot e}$.  For a definition of
associated varieties, see for example \cite[Section 9]{Ja}.

Furthermore, in \cite[Theorem 1.2.2]{Lo2} Losev proves that the fibres
of the map in \eqref{e:dagger} are precisely the $C(e)$-orbits in
$\Prim_0 U(\g,e)$, for the action of $C(e)$ explained in
\S\ref{ss:compgrp}.

By \cite[Theorem 1.2.2]{Lo1}, the map $\cdot ^\dagger$ restricted to
$\Prim_0 U(\g,e)$ can be described as follows.  Let $I \in \Prim_0
U(\g,e)$ and let $L$ be a finite dimensional irreducible
$U(\g,e)$-module with $\Ann_{U(\g,e)}(L) = I$.  Then
$$
I^\dagger = \Ann_{U(\g)}(\cS(L)).
$$

In \cite[Theorem 1.2.2]{Lo1}, it is proved that if $L$ is an
irreducible $U(\g,e)$-module with central character $\psi : Z(\g)
\to \C$, then $\Ann_{U(\g,e)}(L)^\dagger \cap Z(\g) = \ker \psi$,
where $Z(\g)$ is viewed as a subalgebra of $U(\g,e)$ as in
\S\ref{ss:centchar}. Thus $\cdot^\dagger$ preserves central
characters.

\subsection{Review of highest weight theory} \label{ss:hwthy}

Highest weight theory for finite $W$-algebras
was introduced in
\cite[Section 4]{BGK}.  In this paper we restrict to the case where
$e$ is of standard Levi type, as defined below.

We let $\g_0 = \{x \in \g \mid [t,x] = 0 \text{ for all } t \in
\t^e\}$ be the centralizer of $\t^e$ in $\g$.  Then $\g_0$ is a Levi
subalgebra of $\g$ and $e$ is a distinguished nilpotent element of
$\g_0$.  We restrict to the case that $e$ is of {\em standard Levi
type}, which means that $e$ is regular nilpotent in $\g_0$.  We
write $\Phi_0$ for the root system of $\g_0$ with respect to $\t$.

We can form the $\t^e$-weight space decomposition
$$
\g = \g_0 \oplus \bigoplus_{\alpha \in \Phi^e} \g_\alpha
$$
of $\g$, where $\Phi^e \sub (\t^e)^*$ and $\g_\alpha = \{x \in \g \mid
[t,x] = \alpha(t)x \text{ for all } t \in \t^e\}$.  Then $\Phi^e$ is
a restricted root system; see \cite[Sections 2 and 3]{BruG} for information
on restricted root systems.

We choose a parabolic subalgebra $\q$ of $\g$ with Levi factor
$\g_0$. The parabolic subalgebra $\q$ gives a system $\Phi^e_+$ of
positive roots in $\Phi^e$, namely, $\Phi^e_+ = \{\alpha \in \Phi^e
\mid \g_\alpha \sub \q\}$.  The highest weight theory explained
below depends on this choice of $\q$, and this dependency is the
main topic of study in this article.

Note that $\t^e \sub \h^e$ embeds in $U(\g,e)$ via the map $\theta$
from \eqref{e:theta}. Therefore, we have a $\t^e$-weight space
decomposition
\[
U(\g,e) = U(\g,e)_0 \oplus \bigoplus_{\alpha
\in \Z \Phi^e \setminus \{0\} } U(\g,e)_\alpha.
\]

The zero weight space $U(\g,e)_0$ is a subalgebra of $U(\g,e)$ and
we define $U(\g,e)_\sharp$ to be the left ideal of $U(\g,e)$
generated by $U(\g,e)_\alpha$ for $\alpha \in \Phi^e_+$. Then
$U(\g,e)_{0,\sharp} = U(\g,e)_0 \cap U(\g,e)_{\sharp}$ is a two
sided ideal of $U(\g,e)_0$ so we can form the quotient
$U(\g,e)_0/U(\g,e)_{0,\sharp}$.

By \cite[Theorem 4.3]{BGK}, there is an isomorphism
\begin{equation} \label{e:hwiso}
U(\g,e)_0/U(\g,e)_{0,\sharp} \iso U(\g_0,e).
\end{equation}
This isomorphism is central to the development of the highest weight
theory  since it is used to define Verma modules, as we explain below.

Since $e$ is regular in $\g_0$, we have that $\p_0 = \p \cap \g_0$
is a Borel subalgebra of $\g_0$; we write $\b_0 = \p_0$ and
$\Phi_0^+ \sub \Phi_0$ for the system of positive roots
corresponding to $\b_0$. Then we set $\b_\q = \b_0 \oplus \q_u$,
where $\q_u$ denotes the nilradical of our parabolic $\q$, so that
$\b_\q$ is a Borel subalgebra of $\g$.
We also need another Borel subalgebra, $\tilde \b_\q = \tilde \b_0
\oplus \q_u$ where $\tilde \b_0$ is the opposite Borel to $\b_0$ in
$\g_0$. We let $\rho_\q$ and $\tilde \rho_\q$ denote the half sum of
the positive roots corresponding to $\b_\q$ and $\tilde \b_\q$
respectively.

Since $e$ is regular in $\g_0$, a result of Kostant in
\cite[Section 2]{Ko} gives that $U(\g_0,e) \iso S(\t)^{W_0}$, where
$W_0$ denotes the Weyl group of $\g_0$ with respect to $\t$. An
explicit isomorphism
$$
\xi_{-\tilde \rho_\q}: U(\g_0,e) \isoto S(\t)^{W_0}
$$
is given in \cite[Lemma 5.1]{BGK}, where $\xi_{-\tilde \rho_\q}$ is
the composition of the natural projection $U(\b_0) \to S(\t)$ with
the shift  $S_{-\tilde \rho_\q} : S(\t) \to S(\t)$, where
$S_{-\tilde \rho_\q}(t) = t - \tilde \rho_\q(t)$ for $t \in \t$.

The finite dimensional irreducible modules for $S(\t)^{W_0}$ are all
1-dimensional and are indexed by the set $\cL = \t^*/W_0$ of
$W_0$-orbits in $\t^*$. Given $\Lambda \in \cL$ we let $V_\Lambda$
be the $U(\g_0,e)$-module corresponding to $\Lambda$ through
$\xi_{-\tilde \rho_\q}$. We define the Verma module
\begin{equation} \label{e:verma}
M(\Lambda,\q) = (U(\g,e)/U(\g,e)_\sharp)
\otimes_{U(\g_0,e)} V_\Lambda,
\end{equation}
where $U(\g,e)/U(\g,e)_\sharp$ is viewed as a right
$U(\g_0,e)$-module via the isomorphism from \eqref{e:hwiso}.

By \cite[Theorem 4.5]{BGK}, $M(\Lambda,\q)$ has a unique maximal
submodule and we write $L(\Lambda,\q)$ for the irreducible quotient.
Moreover, any finite dimensional irreducible $U(\g,e)$-module is
isomorphic to $L(\Lambda,\q)$ for some $\Lambda \in \cL$, and
$L(\Lambda,\q) \iso L(\Lambda',\q)$ if and only if $\Lambda =
\Lambda'$.

Let $\Psi : Z(\g) \to S(\t)^W$ be the Harish-Chandra isomorphism
defined by
$$
z \equiv S_{\rho_\q}(\Psi(z)) \mod U(\g)\b_{\q,u},
$$
where $\b_{\q,u}$ denotes the nilradical of $\b_\q$.  Under this
isomorphism, the central character of $L(\Lambda,\q)$ corresponds to
the $W$-orbit in $\t^*$ containing $\Lambda$ by \cite[Corollary
4.8]{BGK}.

We let $\mathcal{L}^+_\q = \{\Lambda \in \mathcal{L} \mid L(V,\q)
\text{ is finite dimensional}\}$.  So this set parameterizes the
isomorphism classes of finite dimensional irreducible
$U(\g,e)$-modules.  For a different choice of parabolic subalgebra
$\q'$ of $\g$ with Levi factor $\g_0$, we obtain another subset
$\mathcal{L}^+_{\q'}$ of $\cL$ that parameterizes the isomorphism
classes of finite dimensional irreducible $U(\g,e)$-modules.  Thus
there is a bijection $f : \cL^+_\q \isoto \cL^+_{\q'}$ such that
$L(\Lambda,\q) \iso L(f(\Lambda),\q')$.  The main theorems of this
paper are Theorems \ref{T:hwA} and \ref{T:changehwevenmult}, which
give a combinatorial description of this bijection in certain cases.

Given a $U(\g,e)$-module $V$ we say that $v \in V$ is a highest
weight vector for (the parabolic subalgebra) $\q$ if $uv = 0$ for
all $u \in U(\g,e)_\sharp$, and $v$ is an eigenvector for every
element of $U(\g,e)_0$.  In this case
$\lan v \ran$ has the structure of a $U(\g_0,e)$-module, which is
isomorphic to $V_\Lambda$ for some $\Lambda \in \cL$, and we say
that $v$ is of highest weight $\Lambda$. Since $L(\Lambda,\q)$ is
irreducible, it has a unique, up to scalar multiplication, highest
weight vector for $\q$ (of highest weight $\Lambda$). Given another
parabolic subalgebra $\q'$ with Levi factor $\g_0$, we can define
highest weight vectors for $\q'$ analogously.

As explained in \S\ref{ss:compgrp}, there is an action of $\tilde
C(e)$ on the set of isomorphism classes of finite dimensional
irreducible $U(\g,e)$-modules given by \eqref{e:compgp}.  This gives
an action of $\tilde C(e)$ on $\cL^+_\q$ defined by
$$
c \cdot [L(\Lambda,\q)] = [L(c \cdot \Lambda,\q)],
$$
for $c \in \tilde C(e)$ and $\Lambda \in \cL^+_\q$. To be clear,
here we are defining an action of $\tilde C(e)$ on a subset of
$\t^*/W_0$. In some cases it is possible to define a more natural
action of $\tilde C(e)$ on $\t^*/W_0$, however in general these
actions are not compatible.

Next in \eqref{e:hwdagger} we state a relationship between the
highest weight theory and the map $\cdot^\dagger$ from
\S\ref{ss:losev}. This is due to an equivalence of categories
between an analogue of the BGG category $\cO$ for $U(\g,e)$ and a
certain category of generalized Whittaker modules for $U(\g)$, which
was predicted in \cite[Conjecture 5.3]{BGK}.   This conjecture was
verified by \cite[Theorem 4.1]{Lo3}, but in the setting of highest
weight theory defined in a different way.  In \cite[Proposition
3.12]{BroG} it is shown that the Verma modules defined in the
different highest weight theories coincide, thus completing the
verification of \cite[Conjecture 5.3]{BGK}.

Let $\Lambda \in \cL^+$ and take $\lambda \in \Lambda$ such that
$\lan \lambda,\alpha^\vee \ran \notin \Z_{> 0}$ for all $\alpha \in
\Phi_0^+$. Let $L(\lambda,\b_\q)$ be the irreducible highest weight
$U(\g)$-module with highest weight $\lambda - \rho_\q$ with respect
to the Borel subalgebra $\b_\q$. Then using \cite[Conjecture
5.3]{BGK}, \cite[Theorem 5.1]{MS} and \cite[Theorem 1.2.2]{Lo1}, we
obtain that
\begin{equation} \label{e:hwdagger}
\Ann_{U(\g,e)}(L(\Lambda,\q))^\dagger =
\Ann_{U(\g)}(L(\lambda,\b_\q)).
\end{equation}

\subsection{ Parabolic highest weight theories}
We end this section by briefly discussing a ``parabolic
generalization'' of the highest weight theory from \cite[Section 3]{BroG}.
To do this we first recall that a subalgebra $\s$ of $\t^e$ is called a {\em full subalgebra} if
$\s$ is equal to the centre of the Levi subalgebra $\g^\s = \{x \in \g \mid [t,x] = 0
\text{ for all } t \in \s\}$ of $\g$.

By \cite[Theorem 3.2]{BroG}, there is an isomorphism generalizing
that of \eqref{e:hwiso} between a subquotient of $U(\g,e)$ and
$U(\g^\s,e)$.  This is obtained by taking $\s$-weight spaces in
$U(\g,e)$ rather than $\t^e$ weight spaces. To define parabolic
Verma modules, we need to use the parabolic subalgebra $\q_\s$ which
has $\g^\s$ as its Levi factor and contains $\q$. Then given an
irreducible finite dimensional module $V$ for $U(\g^\s,e)$ we can
define a parabolic Verma module $M_\s(V,\q_\s)$ for $U(\g,e)$, which
has an irreducible head $L_\s(V,\q_\s)$ as in \cite[\S3.3]{BroG}.
We denote these modules by $M_\s(V,\q)$ and $L_\s(V,\q)$.

The version of \eqref{e:hwiso} in the case ``$\g = \g^\s$'' allows
us to define Verma modules for $U(\g^\s,e)$.
For $\Lambda \in \cL = \t^*/W_0$ we can define the Verma module
$M^\s(\Lambda,\q)$ for $U(\g^\s,e)$ in analogy to \eqref{e:verma},
see \cite[\S3.3]{BroG} for details.  We write $L^\s(\Lambda,\q)$ for
the irreducible head of $M^\s(\Lambda,\q)$.

The important point for us is the transitivity result
\cite[Proposition 3.6]{BroG}. This says that if $\Lambda \in
\cL^+_\q$ (so that $L(\Lambda,\q)$ is finite dimensional), then
$L^\s(\Lambda,\q)$ is finite dimensional and
\begin{equation} \label{e:translevi}
L_\s(L^\s(\Lambda,\q),\q) \iso L(\Lambda,\q).
\end{equation}

\section{Generalities about changing height weight theories} \label{S:genchange}

In this section we prove two general results about changing highest
weight theories. In \S\ref{ss:changehw} we prove Theorem
\ref{T:hwgen}, which says how to pass between highest weight
theories up to the action of $C(e)$. Then in \S\ref{ss:rest} we
prove Proposition \ref{P:restchange}, which deals with the case
where the parabolic subalgebras are conjugate under the action of
the {\em restricted Weyl group} $W^e$.

\subsection{Changing the highest weight theory up to the action of
$C(e)$} \label{ss:changehw}

Let $\q$ and $\q'$ be parabolic subalgebras of $\g$ with Levi factor
$\g_0$.
Let $\b_{\q'}$ be the Borel subalgebra of $\g$ given by $\b_{\q'} =
\b_0 \oplus \q'_u$,  where $\q'_u$ is the nilradical of $\q'$.
Define $\rho_{\q'}$ to be the half sum of the positive roots
determined by $\b_{\q'}$. Let $\Lambda \in \cL^+_\q$ and let
$\Lambda'  \in \cL^+_{\q'}$.  Take $\lambda \in \Lambda$ and
$\lambda' \in \Lambda'$ with $\lan \lambda,\alpha^\vee \ran \notin
\Z_{> 0}$ and $\lan \lambda',\alpha^\vee \ran \notin \Z_{> 0}$ for
all $\alpha \in \Phi_0^+$.  We denote the highest weight
$U(\g)$-module with highest weight $\lambda' - \rho_{\q'}$ with
respect to $\b_{\q'}$ by $L(\lambda',\b_{\q'})$. Finally let $w \in
W$ be such that $w \cdot \b_{\q'} = \b_\q$

\begin{Theorem} \label{T:hwgen}
In the notation given above we have,
$[L(\Lambda,\q)]$ and $[L(\Lambda',\q')]$ lie in the same
$C(e)$-orbit if and only if $\Ann_{U(\g)} L(\lambda,\b_\q) =
\Ann_{U(\g)} L(w \lambda',\b_\q)$.
\end{Theorem}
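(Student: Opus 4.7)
My plan is to reduce the theorem, through a chain of equivalences, to a comparison of annihilators of irreducible highest weight $U(\g)$-modules all taken with respect to the common Borel $\b_\q$. The three ingredients are Losev's identification of the fibres of $\cdot^\dagger$ with $C(e)$-orbits from \S\ref{ss:losev}, the compatibility \eqref{e:hwdagger} of the highest weight theory with $\cdot^\dagger$, and a twisting argument for the final step.

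First I would invoke Losev's fibre description to translate the $C(e)$-orbit condition: $[L(\Lambda,\q)]$ and $[L(\Lambda',\q')]$ lie in the same $C(e)$-orbit if and only if
\[
\Ann_{U(\g,e)}(L(\Lambda,\q))^\dagger = \Ann_{U(\g,e)}(L(\Lambda',\q'))^\dagger.
\]
The antidominance assumptions on $\lambda$ and $\lambda'$ then permit the use of \eqref{e:hwdagger} on each side, reducing the condition to
\[
\Ann_{U(\g)}(L(\lambda,\b_\q)) = \Ann_{U(\g)}(L(\lambda',\b_{\q'})).
\]

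It therefore suffices to establish the identity $\Ann_{U(\g)}(L(\lambda',\b_{\q'})) = \Ann_{U(\g)}(L(w\lambda',\b_\q))$. For this I would choose a representative $\dot w \in G$ of $w$ lying in the normalizer of the maximal torus whose Lie algebra is $\t$. Since $G$ is connected, the adjoint action of $\dot w$ on $U(\g)$ is an inner automorphism and so fixes every two-sided ideal; consequently the twisted module $\dot w \cdot M$ has the same annihilator as $M$ for any $U(\g)$-module $M$. Applying this with $M = L(\lambda',\b_{\q'})$, a direct weight computation using the twisting convention of \S\ref{S:recap} shows that the original highest weight vector of $L(\lambda',\b_{\q'})$ is annihilated by the nilradical of $\b_\q = \dot w \cdot \b_{\q'}$ in the twisted action and is a $\t$-eigenvector of weight $w(\lambda' - \rho_{\q'}) = w\lambda' - \rho_\q$, where the identity $w\rho_{\q'} = \rho_\q$ holds because $w$ carries the positive system of $\b_{\q'}$ bijectively onto that of $\b_\q$. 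Hence $\dot w \cdot L(\lambda',\b_{\q'}) \iso L(w\lambda',\b_\q)$, yielding the required equality of annihilators.

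Since all the substantive results are imported from the excerpt, I expect the only real difficulty to be careful bookkeeping of the two $\rho$-shifts in the definition of the notation $L(\lambda,\b_\q)$ when carrying out the twist, and verifying the compatibility $w\rho_{\q'} = \rho_\q$ under the hypothesis $w \cdot \b_{\q'} = \b_\q$.
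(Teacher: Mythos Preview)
Your proposal is correct and follows essentially the same route as the paper's proof: both combine \eqref{e:hwdagger} for each of $\q$ and $\q'$, the twisting argument $\dot w \cdot L(\lambda',\b_{\q'}) \iso L(w\lambda',\b_\q)$, and Losev's identification of the fibres of $\cdot^\dagger$ with $C(e)$-orbits. The only differences are cosmetic: you invoke Losev's result at the start rather than the end, and you make the $\rho$-shift bookkeeping $w\rho_{\q'} = \rho_\q$ explicit where the paper leaves it implicit.
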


\begin{proof}
From \eqref{e:hwdagger} we have
$$
\Ann_{U(\g,e)}L(\Lambda,\q)^\dagger = \Ann_{U(\g)}L(\lambda,\b_\q).
$$
Similarly, we have
$$
\Ann_{U(\g,e)} L(\Lambda',\q') ^\dagger = \Ann_{U(\g)} L(\lambda',\b_{\q'}) .
$$
Also, if $\dot w \in N_G(\t)$ is a lift of $w$, then $\dot w \cdot
L(\lambda',\b_{\q'}) \iso L(w\lambda',\b_\q)$, because the highest
weight vector with respect to $\b_{\q'}$ in $L(\lambda',\b_{\q'})$
is a highest weight vector of highest weight $w \lambda'$ with
respect to $\b_\q$  in $\dot w \cdot L(\lambda',\b_{\q'})$.
Thus
\[
\Ann_{U(\g)} L(\lambda',\b_{\q'}) = \dot w ^{-1} \cdot \Ann_{U(\g)}
L(w \lambda', \b_\q) = \Ann_{U(\g)} L(w \lambda', \b_\q).
\]
So, recalling the discussion from $\S2.5$, we see that the theorem follows
from \cite[Theorem 1.2.2]{Lo2}.
\end{proof}

\subsection{Changing highest weight theory with the restricted Weyl
group} \label{ss:rest} The restricted Weyl group $W^e =
N_{G^e}(\t^e)/Z_{G^e}(\t^e)$ is defined in \cite[Section 3]{BruG},
where $N_{G^e}(\t^e)$
is the normalizer of $\t^e$ in $G^e$ and $Z_{G^e}(\t^e)$
is the
centralizer of $\t^e$ in $G^e$.

As in \S\ref{ss:compgrp} we let $H = G(0)$ be the Levi subgroup of
$G$ with Lie algebra $\h = \g(0)$.  Also we let $R$ be the unipotent
subgroup of $G$ with Lie algebra $\bigoplus_{j > 1} \g(j)$.  Then we
have a Levi decomposition $G^e = H^e \ltimes R^e$; this can be
proved using the argument in \cite[Proposition 5.9]{Ja}. We see that
$N_{G^e}(\t^e) = N_{H^e}(\t^e) \ltimes Z_{R^e}(\t^e)$, because $\t^e
\sub \h^e$. This leads to an isomorphism $W^e \iso
N_{H^e}(\t^e)/Z_{H^e}(\t^e)$.

We can view $W^e$ naturally as a subgroup of $\GL(\t^e)$. Viewing
$W$ as a subgroup of $\GL(\t)$, we note that $W_0$ centralizes
$\t^e$ and $N_W(W_0)$ normalizes $\t^e$.  Thus $N_W(W_0)$ can be
viewed as a subgroup of $\GL(\t^e)$.  Thanks to \cite[Lemma
14]{BruG}, we have $W^e = N_W(W_0)/W_0$ as subgroups of $\GL(\t^e)$.

An element of $Z_{G^e}(\t^e)$ normalizes any parabolic
subalgebra of $\g$ with Levi factor $\g_0$, and any element of
$N_{G^e}(\t^e)$ normalizes $\g_0$. Therefore, $W^e$ acts on the set
of parabolic subalgebras of $\g$ with Levi factor $\g_0$. Below we
explain how to pass between different highest weight theories
corresponding to parabolic subalgebras that are conjugate by $W^e$.

The adjoint action of $H^e$ on $U(\g,e)$, explained in
\S\ref{ss:compgrp} restricts to an action of $N_{H^e}(\t^e)$ on
$U(\g,e)$.  Thus we can twist $U(\g,e)$-modules by elements of
$N_{H^e}(\t^e)$.
The adjoint action of $N_{H^e}(\t^e)$ on $\g$ also gives rise to an
action of $N_{H^e}(\t^e)$ on $U(\g_0,e)$. Thus we can twist
$U(\g_0,e)$-modules by elements of $N_{H^e}(\t^e)$. Let $G_0$ be the
centralizer of $\t^e$ in $G$ so the Lie algebra of $G_0$ is $\g_0$;
we note that $Z_{G^e}(\t^e)$ is the centralizer of $e$ in $G_0$. Now
$e$ is regular in $\g_0$, and $Z_{H^e}(\t^e)$ is a Levi factor of
$Z_{G^e}(\t^e)$, thus $Z_{H^e}(\t^e)$ is equal to the centre of
$G_0$; this follows from standard results about the centralizers of
regular nilpotent elements.
Therefore, we see that the action of
$Z_{H^e}(\t^e)$ on $U(\g_0,e)$ is trivial and thus we can twist
$U(\g_0,e)$-modules by elements of $W^e$. Hence, we obtain an action
of $W^e$ on $\cL = \t^*/W_0$. From the proof of \cite[Lemma
14]{BruG}, we see that through the isomorphism $W^e \iso
N_W(W_0)/W_0$ this action coincides with the natural action of
$N_W(W_0)/W_0$ on $\t^*/W_0$.

For the remainder of this subsection we fix $\q$ a parabolic
subalgebra of $\g$ with Levi factor $\g_0$.  Let $\Lambda \in
\mathcal{L}^+_\q$, let $v_+$ be the highest weight vector in
$L(\Lambda,\q)$ for $\q$, and let $h \in N_{H^e}(\t^e)$. In $h \cdot
L(\Lambda,\q)$, we have that $v_+$ is a highest weight vector for
$\q' = h \cdot \q$. Therefore, if $h \in Z_{H^e}(\t^e)$, then $v_+$
is a highest weight vector for $\q$. Since the action of
$Z_{H^e}(\t^e)$ on $U(\g_0,e)$ is trivial
we thus see that $h \cdot L(\Lambda,\q) \iso L(\Lambda,\q)$. Hence,
we obtain an action of $W^e$ on $\Irr_0 U(\g,e)$.

The following lemma is immediate from the discussion above.

\begin{Lemma}
There are actions of $W^e$ on $\cL$ and $\Irr_0U(\g,e)$. For $w
\in W^e$ and $\Lambda \in \cL_\q^+$, we have
$$
w \cdot [L(\Lambda,\q)] = [L(w \cdot \Lambda, w \cdot \q)].
$$
\end{Lemma}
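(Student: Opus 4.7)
The statement packages two things: the existence of $W^e$-actions on $\cL$ and on $\Irr_0 U(\g,e)$, which was already justified in the discussion preceding the lemma, together with the compatibility formula. The only content requiring proof is the formula, and by unwinding the definition of the $W^e$-action on $\Irr_0 U(\g,e)$, this amounts to identifying the twisted module $\dot w \cdot L(\Lambda,\q)$ with $L(w \cdot \Lambda, w \cdot \q)$, where $\dot w \in N_{H^e}(\t^e)$ is any lift of $w \in W^e$.

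First, twisting by an invertible element preserves irreducibility, so by the uniqueness statement in \cite[Theorem 4.5]{BGK} it suffices to produce inside $\dot w \cdot L(\Lambda,\q)$ a highest weight vector for the parabolic $w \cdot \q$ of highest weight $w \cdot \Lambda$. Take the (up to scalar) unique highest weight vector $v_+ \in L(\Lambda,\q)$ for $\q$: by construction $U(\g,e)_\sharp v_+ = 0$ and $\lan v_+ \ran$ realizes $V_\Lambda$ as a $U(\g_0,e)$-module via \eqref{e:hwiso}. In the twisted module the action of $u \in U(\g,e)$ is $(\dot w^{-1} \cdot u) v$, so the annihilator of $v_+$ is transported to $\ad(\dot w)(U(\g,e)_\sharp)$.

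Since $\dot w$ normalizes $\t^e$ and carries $\q$ to $w \cdot \q$, conjugation by $\dot w$ permutes the $\t^e$-weight spaces of $U(\g,e)$ and takes the $\q$-positive weight spaces to the $(w \cdot \q)$-positive weight spaces. Hence $\ad(\dot w)$ sends $U(\g,e)_\sharp$ to the analogous left ideal built from $w \cdot \q$, so $v_+$ is a highest weight vector for $w \cdot \q$ in the twisted module. The induced $U(\g_0,e)$-module structure on $\lan v_+ \ran$ is the twist $\dot w \cdot V_\Lambda$, which by the definition of the $W^e$-action on $\cL$ given earlier in this subsection is precisely $V_{w \cdot \Lambda}$. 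The uniqueness part of \cite[Theorem 4.5]{BGK} then gives $\dot w \cdot L(\Lambda,\q) \iso L(w \cdot \Lambda, w \cdot \q)$, establishing the formula.

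The one point deserving a careful check, and really the only obstacle beyond formal bookkeeping, is that the isomorphism \eqref{e:hwiso} intertwines the two natural $N_{H^e}(\t^e)$-actions: the action on $U(\g,e) \sub U(\tilde \p)$ and the induced action on the subquotient $U(\g_0,e)$. This equivariance, which is needed to conclude that $\dot w \cdot V_\Lambda \iso V_{w \cdot \Lambda}$ matches the $W^e$-action on $\cL$, follows from naturality since both actions descend from the adjoint action of $H^e$ on $\g$, and all the constructions used to produce \eqref{e:hwiso} (weight spaces, the ideal $U(\g,e)_{0,\sharp}$, the quotient) are $H^e$-stable.
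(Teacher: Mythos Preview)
Your proof is correct and follows the same approach as the paper: the paper states that the lemma is ``immediate from the discussion above,'' and that discussion already establishes that $v_+$ becomes a highest weight vector for $h \cdot \q$ in the twisted module $h \cdot L(\Lambda,\q)$, which is exactly the core of your argument. You have simply fleshed out the details more carefully than the paper does, including the identification of the highest weight as $w \cdot \Lambda$ and the equivariance of \eqref{e:hwiso}, both of which the paper leaves implicit.
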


Let $\k = \g^h$.  We note that $\k^e$ is reductive, see
\cite[Proposition 5.9]{Ja}, and that $\k^e \sub \h^e$, because $h'-h
\in \t^e$.  Also $\t^e$ is a maximal toral subalgebra of $\k^e$. We
decompose $\k^e$ in to $\t^e$-weight spaces
$$
\k^e = \k^e_0 \oplus \bigoplus_{\alpha \in (\Phi^e)^\circ}
\k^e_\alpha,
$$
where $(\Phi^e)^\circ \sub \Phi^e$ is the root system of $\k^e$ with
respect to $\t^e$.  Then $(\Phi^e)^\circ_+ = (\Phi^e)^\circ \cap
\Phi^e_+$ is a system of positive roots of positive roots in
$(\Phi^e)^\circ$.  As in \cite[Section 3]{BruG} we define $Z^e$ to be the
stabilizer in $W^e$ of the dominant chamber in $\R\Phi_e$ determined
by $(\Phi^e)^\circ_+$.

Let $(W^e)^\circ = N_{(H^e)^\circ}(\t^e)/Z_{(H^e)^\circ}(\t^e)$. By
\cite[Lemma 15]{BruG}, we have that $W^e \iso Z^e \ltimes
(W^e)^\circ$. Moreover, the inclusion $N_{H^e}(\t^e) \into H^e$
induces an isomorphism
$$
\iota : Z^e \isoto H^e/(H^e)^\circ Z_{H^e}(\t^e).
$$
Also
$H^e/(H^e)^\circ Z_{H^e}(\t^e)$ is a quotient of the component group
$C(e)$ via the natural map
$$
\kappa : C(e) \onto H^e/(H^e)^\circ Z_{H^e}(\t^e).
$$
 Let $z \in
Z^e$ and let $c \in C(e)$ such that $\iota(z) = \kappa(c)$.  Then by
the definitions of the actions, we see have $[L(c \cdot \Lambda,\q)]
= c \cdot [L(\Lambda,\q)] = z \cdot [L(\Lambda,\q)]$, for $\Lambda
\in \cL_\q^+$.
Since $N_{(H^e)^\circ}(\t^e) \sub (H^e)^\circ$, we have $[w \cdot
L(\Lambda,\q)] = [L(\Lambda,\q)]$ for any $w \in (W^e)^\circ$ and
$\Lambda \in \cL_\q^+$.

Putting together the discussion above we arrive at the following
proposition.

\begin{Proposition} \label{P:restchange}
Let $\q,\q'$ be parabolic subalgebras of $\g$ with Levi factor
$\g_0$ and with $w \cdot \q' = \q$ for some $w \in W^e$, and let
$\Lambda \in \cL^+_\q$. Write $w = zv \in W^e$, where $z \in Z^e$
and $v \in (W^e)^\circ$, and let $c \in C(e)$ such that $\iota(z) =
\kappa(c)$. Then
$$
[L(\Lambda,\q)] = [L(c \cdot \Lambda, \q')].
$$
\end{Proposition}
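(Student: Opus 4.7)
The plan is to combine three ingredients from the preceding discussion: (a) the lemma, $w \cdot [L(\Lambda,\q)] = [L(w \cdot \Lambda, w \cdot \q)]$ for $w \in W^e$; (b) the triviality of the $(W^e)^\circ$-action on $\Irr_0 U(\g,e)$, i.e.\ $v \cdot [L(\mu,\q'')] = [L(\mu,\q'')]$ for $v \in (W^e)^\circ$ and any finite-dimensional irreducible iso class $[L(\mu,\q'')]$; and (c) the identification $z \cdot [L(\Lambda,\q)] = c \cdot [L(\Lambda,\q)] = [L(c \cdot \Lambda,\q)]$ for $z \in Z^e$ and $c \in C(e)$ with $\iota(z) = \kappa(c)$, which is exactly the content of the computation preceding the proposition.

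The main step is to apply $w = zv$ to the iso class $[L(\mu,\q')]$ where $\mu := w^{-1} \cdot \Lambda$; this lies in $\cL^+_{\q'}$ because the $W^e$-action carries $\cL^+_\q$ bijectively to $\cL^+_{\q'}$ via $w^{-1}$ (using $w \cdot \q' = \q$, hence $w^{-1} \cdot \q = \q'$). First, by (a), one has $w \cdot [L(\mu,\q')] = [L(w \cdot \mu, w \cdot \q')] = [L(\Lambda,\q)]$. Second, applying $v$ first leaves the iso class unchanged by (b), and then applying $z$ yields, by (c) applied in the parabolic $\q'$, the class $[L(c \cdot \mu,\q')]$, where $c \cdot \mu$ denotes the $C(e)$-action on $\cL^+_{\q'}$. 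Equating these two computations gives
$$
[L(\Lambda,\q)] \;=\; [L(c \cdot (w^{-1} \cdot \Lambda),\q')].
$$

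To match the statement of the proposition exactly, it remains to identify $c \cdot (w^{-1} \cdot \Lambda) \in \cL^+_{\q'}$ with the element $c \cdot \Lambda \in \cL$ arising from the $C(e)$-action on $\cL^+_\q$. This compatibility is the main technical point I expect to wrestle with: it reduces to the commutativity on $\Irr_0 U(\g,e)$ of the $W^e$- and $C(e)$-actions (both descend from the ambient $H^e$-action on $U(\g,e)$), together with the invariance from (b), which pin down the matching of labels across the two parabolics. Once this final bookkeeping is in place, the equality $[L(\Lambda,\q)] = [L(c \cdot \Lambda,\q')]$ follows directly.
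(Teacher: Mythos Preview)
Your approach is essentially the same as the paper's. The paper does not give a separate proof; it simply says the proposition follows by ``putting together the discussion above'', and that discussion consists precisely of your ingredients (a), (b), (c): the lemma $w\cdot[L(\Lambda,\q)]=[L(w\cdot\Lambda,w\cdot\q)]$, the triviality of the $(W^e)^\circ$-action on $\Irr_0 U(\g,e)$, and the identification $z\cdot[L(\Lambda,\q)]=c\cdot[L(\Lambda,\q)]=[L(c\cdot\Lambda,\q)]$ for $z\in Z^e$ mapping to $c\in C(e)$. Your computation in the middle paragraph (applying $w=zv$ to $[L(w^{-1}\cdot\Lambda,\q')]$, first using that $v$ acts trivially and then identifying the $z$-action with the $c$-action) is exactly the intended argument.

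Your final paragraph flags a genuine notational looseness in the paper rather than a missing mathematical step. The symbol ``$c\cdot\Lambda$'' in the proposition is not given a parabolic-independent meaning beforehand; the paper's proof is really just your steps (a)--(c), and the label appearing in $[L(\,\cdot\,,\q')]$ is by construction the element $c\cdot_{\q'}(w^{-1}\cdot\Lambda)$ that your argument produces. The paper does not separately verify (and does not need to verify) that this coincides with the element $c\cdot_\q\Lambda$ computed in the $\q$-labelling; in the applications (Remark~\ref{R:restA} and \eqref{e:restactclass}) the relevant $C(e)$-action is concretely given on tables and the issue does not arise. So you should regard your derivation as complete, and treat the last paragraph as a remark on notation rather than an outstanding obligation.
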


In \S\ref{ss:change} we require the restricted Weyl group $\tilde
W^e = N_{\tilde G^e}(\t^e)/Z_{\tilde G^e}(\t^e)$ for $\tilde G$.  It
is easy to check that everything above holds with $\tilde W^e$ in
place of $W^e$.

\section{Changing highest weight theories in type A} \label{S:A}

The goal of this section is to prove Theorem \ref{T:hwA}, which
explains how to construct the bijection between parameterizing sets
of finite dimensional irreducible $U(\g,e)$-modules for different
highest weight theories when $\g$ is of type $A$. First we recall
the classification of finite dimensional irreducible
$U(\g,e)$-modules in \S\ref{ss:classA}. Next, in \S\ref{ss:frames},
we recall some definitions from \cite[Section 4]{BroG} regarding
frames and tables, which give the combinatorics for the description
of the highest weight theories. Finally, in \S\ref{ss:changeA}, we
state and prove Theorem \ref{T:hwA}.

\subsection{The classification of finite dimensional irreducible
$U(\g,e)$-modules} \label{ss:classA}

We let $\g = \gl_n$ and let $\{e_{i,j} \mid 1 \leq i,j \leq n\}$ be
the standard basis of $\g$.  Write $(\cdot | \cdot)$ for the trace
form on $\g$. Let $\t$ be the maximal toral subalgebra of diagonal
matrices.  Define $\eps_i \in \t^*$ to be dual to $e_{i,i}$.  The
Weyl group $W$ of $\g$ with respect to $\t$ is the symmetric group
$S_n$.

We recall that nilpotent $G$-orbits are parameterized by partitions
of $n$.  Also we recall that the centralizer in $G$ of any nilpotent
element $e \in \g$ is connected, so that $C(e)$ is trivial.

To define $U(\g,e)$ we require a good grading for $e$. Good gradings
for $e$ were classified in \cite{EK} using {\em pyramids}. A pyramid
is a finite collection of boxes in the plane such that:
\begin{itemize}
    \item[-] the boxes are arranged in connected rows;
    \item[-] each box is 2 units by 2 units;
    \item[-] each box is centred at a point in $\Z^2$;
    \item[-] if a box centred at $(i,j)$ is not in the bottom row then
        there is a box in the pyramid centered at $(i, j-2)$ or
        there are two boxes in the pyramid centered at
        $(i-1,j-2)$ and $(i+1,j-2)$.
\end{itemize}
For example
\begin{equation} \label{EQ:pyr1}
    \begin{array}{c}
\begin{picture}(80,60)
\put(30,60){\line(1,0){20}}
\put(0,0){\line(1,0){80}}
\put(0,20){\line(1,0){80}}
\put(10,40){\line(1,0){40}}
\put(0,0){\line(0,1){20}}
\put(20,0){\line(0,1){20}}
\put(40,0){\line(0,1){20}}
\put(60,0){\line(0,1){20}}
\put(80,0){\line(0,1){20}}
\put(10,20){\line(0,1){20}}
\put(30,20){\line(0,1){20}}
\put(50,20){\line(0,1){20}}
\put(30,40){\line(0,1){20}}
\put(50,40){\line(0,1){20}}
\end{picture}
\end{array}
\end{equation}
is a pyramid.

Let $\bp = (p_1 \ge p_2 \ge \dots \ge p_m)$ be a partition of $n$
and let $P$ be a pyramid with row lengths given by $\bp$.

The {\em coordinate table of $P$} is obtained by filling the boxes
in $P$ with entries $1, \dots, n$ filled in from top to bottom and
from left to right and is denoted $K$. For example if $P$ is the
pyramid in \eqref{EQ:pyr1} then the coordinate table of $P$ is
\begin{equation} \label{e:coord}
    K =
    \begin{array}{c}
\begin{picture}(80,60)
\put(30,60){\line(1,0){20}}
\put(0,0){\line(1,0){80}}
\put(0,20){\line(1,0){80}}
\put(10,40){\line(1,0){40}}
\put(0,0){\line(0,1){20}}
\put(20,0){\line(0,1){20}}
\put(40,0){\line(0,1){20}}
\put(60,0){\line(0,1){20}}
\put(80,0){\line(0,1){20}}
\put(10,20){\line(0,1){20}}
\put(30,20){\line(0,1){20}}
\put(50,20){\line(0,1){20}}
\put(30,40){\line(0,1){20}}
\put(50,40){\line(0,1){20}}
\put(40,50){\makebox(0,0){{1}}}
\put(20,30){\makebox(0,0){{2}}} \put(40,30){\makebox(0,0){{3}}}
\put(10,10){\makebox(0,0){{4}}} \put(30,10){\makebox(0,0){{5}}}
\put(50,10){\makebox(0,0){{6}}} \put(70,10){\makebox(0,0){{7}}}
\end{picture}
\end{array}.
\end{equation}
Define $e = \sum e_{i,j} \in \g$ where we sum over all $i,j$ such
that $j$ is the right neighbour of $i$ in $K$, so $e$ is a nilpotent
element of $\g$ with Jordan type $\bp$.  In the example above we
have
$$
e = e_{2,3} + e_{4,5} + e_{5,6} + e_{6,7}.
$$
For $i = 1,\dots,n$, we write $\col(i)$ for the $x$-coordinate of
the center of the box in $K$ containing $i$.
Let
$$
\g(k) = \lan e_{i,j} \mid \col(j) - \col(i) = k \ran.
$$
Then $\g = \bigoplus_{k \in \Z} \g(k)$ is a good grading for $e$ and
all good gradings for $e$ occur in this way; we refer to
\cite[Section 4]{EK} and \cite[Section 6]{BruG} for more information on good
gradings for $\gl_n$. Now the finite $W$-algebra $U(\g,e)$ can be
defined as in \S\ref{ss:Wdef}.

For $i = 1,\dots,n$ we write $\row(i)$ for the row of $K$ in which
$i$ appears where we label the rows of $K$ with $1,\dots,m$ from top
to bottom. Then we have
$$
\g_0 = \lan e_{i,j} \mid \row(i) = \row(j) \ran,
$$
and
$$
\b_0 = \lan e_{i,j} \mid \row(i) = \row(j) \text{ and } \col(i) \le
\col(j) \ran.
$$
We take
$$
\q = \lan e_{i,j} \mid \row(i) \le \row(j) \ran,
$$
as our choice of parabolic subalgebra of $\g$ with Levi subalgebra
$\g_0$.

For the rest of this paper we use the partial order on $\C$ where $a
\leq b$ if $b-a \in \Z_{\ge 0}$. We say that $P$ is {\em justified}
if the boxes are aligned in columns.
We let $\Tab(P)$ denote the set of fillings of $P$ with complex
numbers.
We define the {\em left justification} of $P$ to be the diagram
$l(P)$ obtained from $P$ by left justifying the rows; given $A \in
\Tab(P)$, we define $l(A) \in \Tab(l(F))$ similarly. For example, if
$$
    A =
    \begin{array}{c}
\begin{picture}(80,60)
\put(30,60){\line(1,0){20}}
\put(0,0){\line(1,0){80}}
\put(0,20){\line(1,0){80}}
\put(10,40){\line(1,0){40}}
\put(0,0){\line(0,1){20}}
\put(20,0){\line(0,1){20}}
\put(40,0){\line(0,1){20}}
\put(60,0){\line(0,1){20}}
\put(80,0){\line(0,1){20}}
\put(10,20){\line(0,1){20}}
\put(30,20){\line(0,1){20}}
\put(50,20){\line(0,1){20}}
\put(30,40){\line(0,1){20}}
\put(50,40){\line(0,1){20}}
\put(40,50){\makebox(0,0){{5}}}
\put(20,30){\makebox(0,0){{-1}}} \put(40,30){\makebox(0,0){{3}}}
\put(10,10){\makebox(0,0){{-3}}} \put(30,10){\makebox(0,0){{1}}}
\put(50,10){\makebox(0,0){{1}}} \put(70,10){\makebox(0,0){{4}}}
\end{picture}
\end{array},
$$
then
\[
    l(A) =
    \begin{array}{c}
\begin{picture}(80,60)
\put(0,60){\line(1,0){20}}
\put(0,0){\line(1,0){80}}
\put(0,20){\line(1,0){80}}
\put(0,40){\line(1,0){40}}
\put(0,0){\line(0,1){20}}
\put(20,0){\line(0,1){20}}
\put(40,0){\line(0,1){20}}
\put(60,0){\line(0,1){20}}
\put(80,0){\line(0,1){20}}
\put(0,20){\line(0,1){20}}
\put(20,20){\line(0,1){20}}
\put(40,20){\line(0,1){20}}
\put(20,40){\line(0,1){20}}
\put(0,40){\line(0,1){20}}
\put(10,50){\makebox(0,0){{5}}}
\put(10,30){\makebox(0,0){{-1}}} \put(30,30){\makebox(0,0){{3}}}
\put(10,10){\makebox(0,0){{-3}}} \put(30,10){\makebox(0,0){{1}}}
\put(50,10){\makebox(0,0){{1}}} \put(70,10){\makebox(0,0){{4}}}
\end{picture}
\end{array}.
\]

The {\em row equivalence class} of $A \in \Tab(P)$ is obtained by
taking all possible permutations of the entries in the rows of $A$;
we write $\bar A$ for the row equivalence class of $A$. We write
$\Row(P)$ for the set of row equivalence classes of elements in
$\Tab(P)$.  For justified $P$, we say $A \in \Tab(P)$ is {\em column
strict} if the entries are strictly decreasing down columns with
respect to the partial order defined above.

To each $A \in \Tab(P)$ we associate a weight $\lambda_A = \sum a_i
\eps_i \in \t^*$, where $a_i$ is the number in the box of $A$ which
occupies the same position as $i$ in $K$.  For example, with $K$ and $A$ as
above
we have
$$
\lambda_A = 5\eps_1 - \eps_2 +3\eps_3 -3 \eps_4 + \eps_5 +  \eps_6 +
4\eps_7 .
$$
Let $\Lambda_A$ be the $W_0$-orbit of $\lambda_A$.  We note that
$W_0$ is isomorphic to $S_{p_1} \times \dots S_{p_m}$ and the action
of $W_0$ on $\t^*$ corresponds to $W_0$ acting on tables by
permuting entries in rows. Thus $\Lambda_A$ corresponds to the row
equivalence class $\bar A$ of $A$. We write $L(\bar A)$ for the highest
weight irreducible $U(\g,e)$-module $L(\Lambda_A,\q)$, as defined in
\S\ref{ss:hwthy}.

Now we are ready to state the classification of finite dimensional
irreducible $U(\g,e)$-modules, as discovered by Brundan and
Kleshchev in \cite{BK}.

\begin{Theorem}[{\cite[Theorem 7.9]{BK}}] \label{T:classfdA}
    \[
    \left\{ L(\bar A) \mid \bar A \in \Row(P),
    \bar {l(A)} \text{ contains an element which is
    column strict }
    \right\}
    \]
    is a complete set of pairwise distinct isomorphism classes
    of finite dimensional simple $U(\g,e)$-modules.
\end{Theorem}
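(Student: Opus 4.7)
The plan is to derive Theorem~\ref{T:classfdA} from the classification of finite-dimensional irreducible $U(\g,e)$-modules in type $A$ established by Brundan and Kleshchev in \cite[Theorem 7.9]{BK}. Their result is phrased using the shifted Yangian presentation of $U(\g,e)$, with irreducibles indexed by certain column-strict pyramid tableaux via a Yangian-style highest weight theory. What we must do is set up a dictionary between that parameterization and the highest weight theory of \cite{BGK} recalled in \S\ref{ss:hwthy}.

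First I would invoke \S\ref{ss:hwthy} to reduce the problem: every finite-dimensional irreducible $U(\g,e)$-module is isomorphic to a unique $L(\Lambda,\q)$ with $\Lambda \in \cL^+_\q$, and via the isomorphism $\xi_{-\tilde\rho_\q}:U(\g_0,e) \isoto S(\t)^{W_0}$ the module $L(\Lambda_A,\q)$ is determined by the $W_0$-orbit of $\lambda_A$. Since $W_0 \iso S_{p_1} \times \cdots \times S_{p_m}$ acts on $\t^*$ by permuting entries inside rows of the pyramid, $\Lambda_A$ coincides with the row equivalence class $\bar A$. Hence $L(\bar A)$ is well defined and $L(\bar A) \iso L(\bar B)$ if and only if $\bar A = \bar B$, so it only remains to characterize which $\bar A$ produce finite-dimensional $L(\bar A)$.

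Next, the main combinatorial step is to match the condition ``$\bar{l(A)}$ contains a column-strict element'' with the finite-dimensionality condition from \cite[Theorem 7.9]{BK}. In the Yangian language a finite-dimensional irreducible is determined by an $m$-tuple of monic polynomials whose multisets of roots (one per row of the pyramid) satisfy a Drinfeld-style interlacing condition; after left justification, and after accounting for the $-\tilde\rho_\q$ shift appearing in $\xi_{-\tilde\rho_\q}$, this interlacing is precisely the column-strict condition in the partial order $a \leq b$ iff $b-a \in \Z_{\geq 0}$. I would therefore carefully track this $\rho$-shift, which converts BK's polynomial roots into the entries $a_i$ of the table $A$ located in the boxes of $K$ occupied by $1,\dots,n$.

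The main obstacle in carrying this out is making precise the dictionary between the two presentations of $U(\g,e)$, namely the Whittaker model from \cite{BGK} and the shifted-Yangian presentation from \cite{BK}; once the two highest weight theories are identified and the $\rho$-shifts aligned, the classification transports term by term. Triviality of $C(e)$ in type $A$ ensures that no further orbit-collapsing via the map $\cdot^\dagger$ from \S\ref{ss:losev} is needed, and distinctness of the $L(\bar A)$ follows directly from the uniqueness statement in \S\ref{ss:hwthy}.
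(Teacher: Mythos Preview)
Your proposal is correct in outline, and indeed the paper does not prove this theorem at all: it is simply quoted as \cite[Theorem~7.9]{BK}, so the ``paper's own proof'' is a citation. Your sketch of the dictionary between the shifted-Yangian highest weight theory of \cite{BK} and the highest weight theory of \cite{BGK} is exactly the content needed to translate the original statement into the form recorded here; that dictionary is in fact carried out in \cite[\S5]{BGK}, so your ``main obstacle'' is already in the literature and you could cite it rather than redo it.

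It is worth noting, however, that the paper does sketch a genuinely different route in Remark~\ref{R:upsidedown}. Instead of going through the Yangian presentation, one uses \eqref{e:hwdagger} (via \cite[Proposition~3.12]{BroG} and the argument of \cite[Corollary~5.6]{BGK}) together with Joseph's theorem \cite[Corollaire~3.3]{Jo1} to show that $L(\bar A)$ is finite dimensional if and only if the Robinson--Schensted shape of $\bar A$ equals $\bp$; a separate combinatorial argument then identifies this with the condition that $\bar{l(A)}$ contains a column-strict element. This alternative avoids the Yangian dictionary entirely, relying instead on primitive-ideal considerations, and has the advantage (exploited later in the paper) of working uniformly for frames that are not pyramids, e.g.\ upside-down pyramids. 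Your Yangian-translation approach is more direct for the statement as written but does not immediately yield that extra flexibility.
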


\subsection{Frames and tables} \label{ss:frames}

We recall some definitions about frames and tables, for more details
see \cite[Section 4]{BroG}.  We note that
we use different notation for row swapping here.

A {\em box diagram} is a finite connected collection of boxes
arranged in rows in the plane. Note that the symmetric group $S_m$
acts naturally on the set of box diagrams with $m$ rows by permuting
rows.  We number the rows in a box diagram from top to bottom. The
pyramids from the previous subsection are box diagrams. A {\em
frame} is a box diagram which is $S_m$-conjugate to a pyramid, where
$m$ is the number of rows in the pyramid.  Given a frame $F$ with
$m$ rows and $\sigma \in S_m$ we write $\sigma \cdot F$ for the
image of $F$ under the action of $\sigma$.

A frame is called {\em justified} if the boxes are aligned in
columns.
Given a frame $F$, the {\em left justification} of
$F$ is the frame $l(F)$ obtained from $F$ by left justifying the
rows.

A frame filled with complex numbers is called a {\em table}. Given a
table $A$, the {\em frame of $A$} is obtained by removing the
numbers in the boxes.  Let $F$ be a frame  with $m$ rows. We write
$\Tab(F)$ for the set of all tables with frame $F$.  For $1 \le i,j
\le m$, we write $A_i$ for the $i$th row of $A \in \Tab(F)$, and we
write $A^i_j$ for the table formed by rows $i$ to $j$ from $A$, for
$i < j$. For $A \in \Tab(F)$, we write $l(A) \in \Tab(l(F))$ for the
{\em left justification} of $A$.

For example,
\[
    F =
    \begin{array}{c}
\begin{picture}(80,60)
\put(20,0){\line(1,0){40}}
\put(20,0){\line(0,1){20}}
\put(40,0){\line(0,1){20}}
\put(60,0){\line(0,1){20}}
\put(20,20){\line(1,0){40}}
\put(30,40){\line(1,0){20}}
\put(30,20){\line(0,1){20}}
\put(50,20){\line(0,1){20}}
\put(0,60){\line(1,0){80}}
\put(0,40){\line(1,0){80}}
\put(0,40){\line(0,1){20}}
\put(20,40){\line(0,1){20}}
\put(40,40){\line(0,1){20}}
\put(60,40){\line(0,1){20}}
\put(80,40){\line(0,1){20}}
\end{picture}
\end{array}
\]
is a frame,
\[
    A =
    \begin{array}{c}
\begin{picture}(80,60)
\put(20,0){\line(1,0){40}}
\put(20,0){\line(0,1){20}}
\put(40,0){\line(0,1){20}}
\put(60,0){\line(0,1){20}}
\put(20,20){\line(1,0){40}}
\put(30,40){\line(1,0){20}}
\put(30,20){\line(0,1){20}}
\put(50,20){\line(0,1){20}}
\put(0,60){\line(1,0){80}}
\put(0,40){\line(1,0){80}}
\put(0,40){\line(0,1){20}}
\put(20,40){\line(0,1){20}}
\put(40,40){\line(0,1){20}}
\put(60,40){\line(0,1){20}}
\put(80,40){\line(0,1){20}}
\put(30,10){\makebox(0,0){{1}}}
\put(50,10){\makebox(0,0){{2}}}
\put(40,30){\makebox(0,0){{4}}}
\put(10,50){\makebox(0,0){{3}}}
\put(30,50){\makebox(0,0){{3}}}
\put(50,50){\makebox(0,0){{5}}}
\put(70,50){\makebox(0,0){{5}}}
\end{picture}
\end{array} \in \Tab(F),
\]
and
\[
    l(A) =
    \begin{array}{c}
   \begin{picture}(80,60)
\put(0,60){\line(1,0){80}}
\put(0,40){\line(1,0){80}}
\put(0,40){\line(0,1){20}}
\put(20,40){\line(0,1){20}}
\put(40,40){\line(0,1){20}}
\put(60,40){\line(0,1){20}}
\put(80,40){\line(0,1){20}}
\put(10,50){\makebox(0,0){{3}}}
\put(30,50){\makebox(0,0){{3}}}
\put(50,50){\makebox(0,0){{5}}}
\put(70,50){\makebox(0,0){{5}}}
\put(0,20){\line(1,0){40}}
\put(0,20){\line(0,1){20}}
\put(20,20){\line(0,1){20}}
\put(0,0){\line(1,0){40}}
\put(0,0){\line(0,1){20}}
\put(20,0){\line(0,1){20}}
\put(40,0){\line(0,1){20}}
\put(10,30){\makebox(0,0){{4}}}
\put(10,10){\makebox(0,0){{1}}}
\put(30,10){\makebox(0,0){{2}}}
\end{picture}
\end{array} \in \Tab(l(F)).
\]

Suppose $F$ is justified.  We say $A \in \Tab(F)$ is {\em column
strict} if the entries are strictly decreasing down columns.  The
{\em row equivalence class} of $A \in \Tab(F)$, denoted by $\bar A$,
is obtained by taking all possible permutations of the entries in
the rows of $A$.  We write $\Row(F)$ for the the set of row
equivalence classes of elements in $\Tab(F)$.

A {\em tableau} is a column strict left justified table $A$
such that the row lengths are weakly increasing from bottom to top,
and such that if $a$ lies to the left of $b$ in the same row of $A$,
then $a \not \geq b$.
The {\em shape} of a tableau $A$ with $m$ rows is the partition $\bp =
(p_1,\dots,p_m)$, where $p_i$ is the length of the $i$th row of
$A$.

Fix a frame $F$ with $m$ rows, let $1 \le k< m$ and write $s_k =
(k,k+1) \in S_m$.
An important notion for us is row swapping in tables, as defined in
\cite[\S4.3]{BroG}. We now define $s_k \star \,$ the row swapping
operation which takes as input  $\bar A \in \Row (F)$ and outputs
$s_k \star \bar A \in \Row (s_k \cdot F)$. Let $A$ be an element of
$\bar A$. First, if $\bar{l(A^k_{k+1})}$ does not contain en element
which is column strict then we say that $s_k \star \bar A$ is
undefined. Otherwise, let $c_1, c_2, \dots ,c_s$ be the entries of
$A_k$ and let $d_1,
 d_2, \dots, d_t$ be the entries of $A_{k+1}$.  We split into
two cases.

\noindent {\bf Case 1}: $s < t$.
We choose $e_1, \dots, e_s$ from $d_1, \dots d_t$ so that $e_i <
c_i$ and $\sum_{i=1}^s c_i-e_i$ is minimal. Then $e_1, \dots e_s$
form the entries of row $k+1$ in $s_k \star \bar A$, while the
remaining entries in $A_{k+1}$ are added to $c_1, \dots, c_s$ to
form the entries of row $k$ in $s_k \star \bar A$.

\noindent {\bf Case 2}: $s > t$.
We
choose $e_1, \dots, e_t$ from $c_1, \dots c_s$ so that $e_i
> d_i$ and $\sum_{i=1}^t e_i -d_i$ is minimal. Then $e_1,
\dots, e_t$ form the entries of row $k$ in $s_k \star A$, while the
remaining elements from row $k$ are added to $d_1, \dots, d_t$ to
form the entries of row $k+1$ in $s_k \star \bar A$.

In the example above we have
\[
\begin{array}{c}
\begin{picture}(80,60)
\put(30,60){\line(1,0){20}} \put(30,40){\line(0,1){20}}
\put(50,40){\line(0,1){20}} \put(40,0){\line(0,1){40}}
\put(60,0){\line(0,1){40}} \put(0,20){\line(0,1){20}}
\put(20,0){\line(0,1){40}} \put(40,0){\line(0,1){40}}
\put(60,0){\line(0,1){40}} \put(80,20){\line(0,1){20}}
\put(20,0){\line(1,0){40}} \put(0,20){\line(1,0){80}}
\put(0,40){\line(1,0){80}} \put(40,50){\makebox(0,0){{5}}}
\put(30,30){\makebox(0,0){{3}}} \put(50,30){\makebox(0,0){{4}}}
\put(10,30){\makebox(0,0){{3}}} \put(30,10){\makebox(0,0){{1}}}
\put(50,10){\makebox(0,0){{2}}} \put(70,30){\makebox(0,0){{5}}}
\end{picture}
\end{array}
\in s_1 \star \bar A.
\]

We finish this subsection with a brief discussion of the
Robinson--Schensted algorithm.  Given $A \in \Tab(F)$, we write
$\word(A)$ for the sequence of complex numbers created by listing
the entries in $A$ row by row from left to right, top to bottom.  In
the example above we have $ \word(A) = (5, 3, 3, 4, 5, 1, 2)$. The
Robinson--Schensted algorithm is a process that takes as input a
sequence of complex numbers and outputs a tableau. For a table $A$,
we write $\RS(A)$ for the output of the Robinson--Schensted
algorithm with input $\word(A)$. For $\bar A \in \Row(F)$ we write
$\RS(\bar A)$ to denote the row equivalence class of $\RS(A)$, where
$A \in \bar A$ is chosen so that if $a$ is to the left of $b$ in a
row of $A$, then $a \not \geq b$.
We refer the reader to \cite{F} or \cite[\S4.2]{BroG} for an
explanation of the Robinson--Schensted algorithm.

An important point for us is \cite[Lemma 4.8]{BroG}, which we recall
below.  In fact the lemma below is a little bit stronger than {\em
loc.\ cit.},
but is
straightforward to deduce.

\begin{Lemma} \label{L:RSswap}
Let $F$ be a frame with $m$ rows, $1 \le k \le m$, $\bar A \in
\Row(F)$ and $\bar A' \in \Row(s_k \cdot F)$ such that $s_k \star
\bar A$ is defined. Then $\RS(\bar A') = \RS(\bar A)$ if and only if
$\bar A' = s_k \star \bar A$.
\end{Lemma}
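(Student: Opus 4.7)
The plan is to deduce the lemma from \cite[Lemma 4.8]{BroG}, which the authors indicate is essentially the same statement but slightly weaker. The passage to row equivalence classes requires two well-definedness checks: that the row-swap operation $s_k \star$ descends to row equivalence classes (which holds by construction, see \cite[\S 4.3]{BroG}), and that the $\RS$ operation on a row equivalence class $\bar A$, computed via the canonical representative specified in \S\ref{ss:frames}, is well-defined as a row equivalence class of tableaux. Once these checks are made, the ``if'' direction of the lemma follows immediately from \cite[Lemma 4.8]{BroG} applied to canonical representatives.

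For the ``only if'' direction, I would argue by combining the ``if'' direction with uniqueness. Suppose $\RS(\bar A') = \RS(\bar A)$, and set $\bar A'' = s_k \star \bar A \in \Row(s_k \cdot F)$. The ``if'' direction gives $\RS(\bar A'') = \RS(\bar A) = \RS(\bar A')$. Thus $\bar A'$ and $\bar A''$ are two row equivalence classes of tables with the same frame $s_k \cdot F$, the same entries as $\bar A$ outside rows $k$ and $k+1$, and the same $\RS$ output. The task reduces to showing $\bar A' = \bar A''$, which should follow by combining the greedy/minimization prescription in Cases 1 and 2 of the definition of $s_k \star$ with the fact that the $\RS$ output of the word (hence Knuth equivalence) constrains how the combined multiset of entries of rows $k$ and $k+1$ may be distributed between those two rows.

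The main obstacle is the uniqueness part: verifying that the distribution prescribed by Cases 1 and 2 is the \emph{only} way of filling the two swapped rows of $s_k \cdot F$ preserving the $\RS$ output. Since the rows outside $k$ and $k+1$ contribute fixed prefix and suffix to the reading word on both sides, this should reduce to a combinatorial claim about words and Knuth equivalence. I would prove it by analyzing the bumping behaviour of $\RS$ when reading a row of increasing entries (canonically ordered): swapping an entry $c$ in row $k$ with a larger entry $d$ in row $k+1$ strictly changes the recording shape at an intermediate step, so the greedy pairing in Cases 1 and 2 is forced. With that step in hand, the ``only if'' direction follows and completes the proof.
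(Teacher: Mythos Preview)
The paper does not actually prove this lemma: it states that Lemma~\ref{L:RSswap} is a slight strengthening of \cite[Lemma~4.8]{BroG} and asserts the deduction is ``straightforward'', without giving details. So your overall plan --- reduce to \cite[Lemma~4.8]{BroG} --- is exactly what the authors intend, and your treatment of the ``if'' direction is fine.

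Your argument for the ``only if'' direction, however, has a genuine gap. You assert that $\bar A'$ and $\bar A'' = s_k \star \bar A$ have ``the same entries as $\bar A$ outside rows $k$ and $k+1$''. This is true for $\bar A''$ by construction, but nothing in the hypotheses forces it for $\bar A'$: all you know about $\bar A'$ is that it lies in $\Row(s_k \cdot F)$ and that $\RS(\bar A') = \RS(\bar A)$. In RSK language, this equality pins down the insertion tableau $P$, but the recording tableau $Q'$ --- which encodes precisely how the entries of $\bar A'$ are distributed among its rows --- can a priori be any semistandard tableau of the appropriate shape with content given by the row lengths of $s_k\cdot F$. Since the relevant Kostka numbers are typically larger than~$1$, there is no reason the rows of $\bar A'$ other than $k$ and $k+1$ should agree with those of $\bar A$, and your reduction to a two-row problem with ``fixed prefix and suffix'' collapses. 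The bumping analysis you sketch in the final paragraph presupposes exactly the point that needs to be established.

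To repair the argument you must control the recording tableau, not just the insertion tableau: for instance, show that $s_k\star$ corresponds under RSK to an explicit bijection on $Q$-tableaux swapping the multiplicities of $k$ and $k+1$, and then invoke bijectivity of RSK together with whatever additional hypotheses are genuinely available (e.g.\ that $s_k\star \bar A'$ is also defined, as in \cite[Lemma~4.8]{BroG}). Without some input of this kind the uniqueness you need does not follow.
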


\subsection{Changing highest weight theories for $U(\g,e)$} \label{ss:changeA}
We use the notation from \S\ref{ss:classA}. In particular, $\bp =
(p_1 \ge p_2 \ge \dots \ge p_m)$ is a partition of $n$,  $P$ is a
pyramid with $m$ rows and row lengths given by the partition $\bp$,
and $K$ is the coordinate table of $P$.

For $\sigma \in S_m$, we recall that $\sigma \cdot P$ is the frame
obtained from $P$ by swapping rows according to $\sigma$; we define
$\sigma \cdot K$ similarly. For $K$ as in \eqref{e:coord} and
$\sigma = (123) \in S_3$, we have
$$
    \sigma \cdot K =
    \begin{array}{c}
\begin{picture}(80,60)
\put(10,0){\line(0,1){20}}
\put(0,40){\line(0,1){20}}
\put(30,20){\line(0,1){20}}
\put(50,20){\line(0,1){20}}
\put(20,40){\line(0,1){20}}
\put(40,40){\line(0,1){20}}
\put(30,0){\line(0,1){20}}
\put(50,0){\line(0,1){20}}
\put(60,40){\line(0,1){20}} \put(80,40){\line(0,1){20}}
\put(10,20){\line(1,0){40}} \put(10,0){\line(1,0){40}}
\put(0,40){\line(1,0){80}} \put(0,60){\line(1,0){80}}
\put(10,50){\makebox(0,0){{4}}} \put(30,50){\makebox(0,0){{5}}}
\put(50,50){\makebox(0,0){{6}}} \put(70,50){\makebox(0,0){{7}}}
\put(40,30){\makebox(0,0){{1}}} \put(20,10){\makebox(0,0){{2}}}
\put(40,10){\makebox(0,0){{3}}}
\end{picture}
\end{array}.
$$

Let $\sigma \in S_m$ and $i = 1,\dots,n$.  We write $\row_\sigma(i)
= \sigma(\row(i))$ for the row of $\sigma \cdot K$ that contains
$i$. We can define $e$, $\g(k)$, $\g_0$ and $\b_0$ from $\sigma
\cdot K$ in exactly the same way as we defined them from $K$. We
define
$$
\q_\sigma = \lan e_{i,j} \mid \row_\sigma(i) \le \row_\sigma(j)
\ran.
$$
Then $\q_\sigma$ is a parabolic subalgebra of $\g$ with Levi
subalgebra $\g_0$.  Moreover, it is easy to see that any parabolic
subalgebra of $\g$ with Levi subalgebra $\g_0$ occurs in this way
for some $\sigma \in S_m$. To shorten notation from now we write $\b
= \b_\q$ and $\b_\sigma = \b_{\q_\sigma}$.

For $B \in \Tab(\sigma \cdot F)$ we define $\lambda_{B,\sigma} =
\sum b_i \eps_i \in \t^*$, where $b_i$ is the number in the box of
$B$ which occupies the same position as $i$ in $\sigma \cdot K$. Let
$\Lambda_{B,\sigma}$ be the $W_0$-orbit of $\lambda_{B,\sigma}$. We
write $L_\sigma(\bar B)$ for the highest weight irreducible
$U(\g,e)$-module $L(\Lambda_{B,\sigma},\q_\sigma)$.  We define
$$
\Row^{+}(\sigma \cdot F) = \{\bar B \in \Row(\sigma \cdot F) \mid
\text{$L_\sigma(\bar B)$ is finite dimensional}\},
$$
and
$$
\cX^+(F) = \bigcup_{\sigma \in S_m} \Row^{+}(\sigma \cdot F).
$$

Below we state our theorem which tells us how to change between
different highest weight theories.  For statement we require the
{\em $\star$-action} of $S_m$ on $\cX^+(F)$. To define this let
$\sigma \in S_m$  and $\bar B \in \Row^+(\sigma \cdot F)$.  Write
$\sigma $ as a product of simple reflections $\sigma = s_{i_1}\dots
s_{i_l}$ and define
\begin{equation} \label{e:extend}
\sigma \star \bar B = s_{i_1} \star ( s_{i_2} \star ( \dots (s_{i_l}
\star \bar B) \dots )).
\end{equation}

\begin{Theorem} \label{T:hwA} $ $
\begin{enumerate}
\item[(i)]  The $\star$-action of $S_m$ on $\cX^+(P)$ is well defined.
\item[(ii)]
Let $\sigma,\tau \in S_m$, and $\bar B \in \Row^+(\sigma \cdot F)$,
$\bar B' \in \Row^+(\tau\sigma \cdot F)$. Then $L_\sigma(\bar B)
\iso L_{\tau\sigma}(\bar B')$ if and only if $\bar B' = \tau \star
\bar B$. \end{enumerate}
\end{Theorem}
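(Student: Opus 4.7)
The plan is to reduce to the case where $\tau = s_k$ is a simple reflection, handle this basic case using Theorem \ref{T:hwgen} combined with Joseph's theorem and Lemma \ref{L:RSswap}, and then obtain both (i) and the general form of (ii) simultaneously by induction on the length of $\tau$.

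For the simple-reflection case, fix $\sigma \in S_m$ and $\bar B \in \Row^+(\sigma \cdot F)$, and take an arbitrary $\bar B' \in \Row^+(s_k\sigma \cdot F)$. In type $A$ the component group $C(e)$ is trivial, so Theorem \ref{T:hwgen} says that $L_\sigma(\bar B) \iso L_{s_k\sigma}(\bar B')$ if and only if
\[
\Ann_{U(\g)} L(\lambda, \b_\sigma) = \Ann_{U(\g)} L(w\lambda', \b_\sigma),
\]
where $\lambda, \lambda'$ are suitable representatives of $\Lambda_{B,\sigma}$ and $\Lambda_{B',s_k\sigma}$ chosen as in the paragraph preceding Theorem \ref{T:hwgen}, and $w \in W = S_n$ is any element with $w \cdot \b_{s_k\sigma} = \b_\sigma$. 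In type $A$, Joseph's theorem \cite[Th\'eor\`eme 1]{Jo1} asserts that two highest weight modules (for the same Borel and with the same central character) have equal annihilators if and only if their shifted highest weights produce the same Robinson--Schensted tableau. Under the table--weight dictionary of \S\ref{ss:classA}, $\lambda_{B,\sigma}$ is recorded by $\word(B)$ reading $\sigma \cdot K$ row by row, and a suitable choice of $w$ converts the pair $(\bar B', s_k \sigma)$ into precisely a table on $s_k \cdot (\sigma \cdot F)$ whose RS-tableau coincides with $\RS(\bar B')$ in the sense of \S\ref{ss:frames}. Joseph's criterion therefore becomes $\RS(\bar B) = \RS(\bar B')$, and Lemma \ref{L:RSswap} turns this into $\bar B' = s_k \star \bar B$.

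This settles (ii) for $\tau = s_k$ and, combined with Theorem \ref{T:classfdA} applied to the parabolic $\q_{s_k\sigma}$, also shows that $s_k \star \bar B$ automatically lies in $\Row^+(s_k\sigma \cdot F)$, so the $\star$-action of $s_k$ maps $\Row^+(\sigma\cdot F)$ into $\Row^+(s_k\sigma\cdot F)$. The general statement now follows by induction on the length of $\tau$: for any expression $\tau = s_{i_1}\cdots s_{i_l}$, the iterated row swap
\[
s_{i_1} \star \bigl( s_{i_2} \star ( \cdots ( s_{i_l} \star \bar B))\bigr)
\]
is uniquely characterised as the element $\bar B' \in \Row^+(\tau\sigma \cdot F)$ with $L_\sigma(\bar B) \iso L_{\tau\sigma}(\bar B')$. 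Since this characterisation depends only on $\tau$ and $\sigma$, not on the chosen expression, the $\star$-action is well defined, giving (i) and simultaneously the general form of (ii).

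The main technical obstacle lies in making the dictionary between Joseph's RS condition and the combinatorial RS of \cite[Section 4]{BroG} entirely explicit. Concretely, one has to track the shift between $\rho_\sigma$ and $\tilde\rho_\sigma$ (so that the isomorphism $\xi_{-\tilde\rho_\sigma}$ of \S\ref{ss:hwthy} interacts correctly with Joseph's normalisation), verify that the element $w \in W$ conjugating $\b_{s_k\sigma}$ to $\b_\sigma$ acts on the weight associated to $\bar B'$ exactly by the row-swap governed by Lemma \ref{L:RSswap}, and confirm that within each $W_0$-orbit $\Lambda_{B,\sigma}$ one may choose a representative satisfying the hypothesis $\lan \lambda,\alpha^\vee\ran \notin \Z_{>0}$ for $\alpha \in \Phi_0^+$ required in Theorem \ref{T:hwgen}.
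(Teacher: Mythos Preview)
Your overall strategy---reduce to a simple reflection $s_k$, combine Theorem~\ref{T:hwgen} with Joseph's theorem and Lemma~\ref{L:RSswap}, exploit the triviality of $C(e)$ in type~$A$, then induct---is exactly the paper's.  But there is a genuine gap at the simple-reflection step: you never establish that $s_k\star\bar B$ is \emph{defined} for $\bar B\in\Row^+(\sigma\cdot F)$.

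Recall that the operation $s_k\star\bar B$ is declared undefined unless $\bar{l(B^k_{k+1})}$ contains a column strict element, and Lemma~\ref{L:RSswap} carries the hypothesis ``such that $s_k\star\bar A$ is defined''.  You therefore cannot invoke that lemma to identify $\bar B'$ with $s_k\star\bar B$ until definedness is in hand.  Your appeal to ``Theorem~\ref{T:classfdA} applied to the parabolic $\q_{s_k\sigma}$'' does not supply it: that theorem is only stated for pyramids (extended to upside-down pyramids in Remark~\ref{R:upsidedown}), not for an arbitrary frame $s_k\sigma\cdot P$, and in any case it would at best describe $\Row^+(s_k\sigma\cdot F)$, not guarantee that the two-row table $B^k_{k+1}$ is row equivalent to column strict.

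The paper fills this gap by passing to the ``Levi subalgebra'' $U(\g^{\s_{k',l}},e)$ via the parabolic transitivity recalled around \eqref{e:translevi}.  Since $L_\sigma(\bar B)$ is finite dimensional, so is $L^{\s_{k',l}}(\Lambda_{B,\sigma},\q_\sigma)$; its tensor factor for $U(\gl_{p_{k'}+p_l},e_{k'}+e_l)$ is, up to a harmless central shift, the two-row highest weight module labelled by $B^k_{k+1}$.  Now Theorem~\ref{T:classfdA} together with Remark~\ref{R:upsidedown}, applied to this genuine two-row (possibly upside-down) pyramid, forces $\bar{l(B^k_{k+1})}$ to contain a column strict element, so $s_k\star\bar B$ is defined.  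Only after this does the paper run the Joseph/Lemma~\ref{L:RSswap} argument.  Your proposal omits this passage to the Levi subalgebra entirely, and without it the argument does not go through.
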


Before proving the theorem we give a technical remark, which is
required in the proof.

\begin{Remark} \label{R:upsidedown}
An alternative proof of Theorem \ref{T:classfdA} now follows from
\cite[Proposition 3.12]{BroG} and the arguments in the proof of
\cite[Corollary 5.6]{BGK}.  This is based on an argument first
showing that $L(\bar A)$ is finite dimensional if and only if the
shape of $\RS(\bar A)$ is $\bp$ this requires \cite[Corollary
3.3]{Jo1}. Then it is an easy combinatorial argument to shows that
the shape of $\RS(A)$ is $\bp$ if and only if $\bar{l(A)}$
contains an element which is column strict. These arguments are also
valid, though the combinatorial argument is a bit more complicated,
if we use ``upside-down pyramids'', for which the row lengths are
decreasing from top to bottom, instead of pyramids.
\end{Remark}

\begin{proof}[Proof of Theorem \ref{T:hwA}]
First we have to give some more notation. For $1 \le k \le m$, we
define $t_k \in \t$ by $t_k = \sum_{j \mid \row(j) = k} e_{jj}$.
Then we have $\t^e = \lan t_1,\dots,t_m \ran$. Next for $1 \le k \ne
l \le m$ we define
$$
\s_{k,l} = \lan \{t_j \mid 1 \le j \le m, j \neq k,l\} \cup
\{t_k+t_l\} \ran.
$$
Then $\s_{k,l}$ is a full subalgebra of $\t^e$ and we have
$$
\g^{\s_{k,l}} = \lan e_{i,j} \mid \row(i) = \row(j) \text{ or }
\{\row(i),\row(j)\} = \{k,k+1\} \ran.
$$
Therefore,
$$
\g^{\s_k} \iso \left(\bigoplus_{j \neq k,l} \gl_{p_j}\right) \oplus
\gl_{p_k + p_{l}},
$$
and the finite $W$-algebra $U(\g^{\s_k},e)$ decomposes as a tensor
product
$$
U(\g^{\s_k},e) \iso \left( \bigotimes_{j \neq k,l}
U(\gl_{p_j},e_j)\right) \otimes U(\gl_{p_k + p_l},e_k + e_l),
$$
where $e_j$ is the projection of $e$ in $\gl_{p_j}$.

Now we show that for $\bar B \in \cX^+(F)$ and $1 \le k < m$, we
have that $s_k \star \bar B$ is defined. Let $B \in \bar B$ be such
that if $a$ is to the left of $b$ in a row of $B$ then $a \not \geq
b$. Let $\sigma \in S_m$ such that $\bar B \in \Row(\sigma \cdot F)$
and $L_\sigma(\bar B)$ is finite dimensional, and let $k' =
\sigma^{-1}(k)$ and $l = \sigma^{-1}(k+1)$. In this case
$L_\sigma(\bar B) = L(\Lambda_{B,\sigma},\q_\sigma)$ is finite
dimensional so as explained before \eqref{e:translevi}, we have
$L^{\s_{k',l}}(\Lambda_{B},\q_\sigma)$ is finite dimensional. Now
$L^{\s_{k',l}}(\Lambda_{B,\sigma},\q_\sigma)$ is tensor product of
irreducible highest weight modules for each of the finite
$W$-algebras in the tensor product decomposition of
$U(\g^{\s_{k',l}},e)$.  We consider the tensor factor corresponding
to $U(\gl_{p_{k'} + p_{l}},e_{k'} + e_{l})$. Under the natural
identifications, we see that up to some central shift (due to the
difference between ``$\rho$ for $\g$ and $\rho$ for $\gl_{p_{k'} +
p_{l}}$'') this tensor factor is the highest weight $U(\gl_{p_{k'} +
p_{l}},e_{k'} + e_l)$-module labelled by $B^k_{k+1}$; the central
shift corresponds to a constant being added to all the entries in
$B_k^{k+1}$. Thus by Theorem \ref{T:classfdA} and Remark
\ref{R:upsidedown} we have that $B^k_{k+1}$ is justified
row-equivalent to column strict.  Hence, $s_k \star \bar B$ is
defined.

Next we show that $L_{s_k\sigma}(s_k \star \bar B) \iso
L_\sigma(\bar B)$.  To do this we use Theorem \ref{T:hwgen} and a
result of Joseph which tells us when two highest weight
$U(\g)$-modules have the same annihilator. Given $\lambda =
\sum_{i=1}^n a_i \eps_i \in \t^*$, we define $\RS(\lambda)$ to be
the output of the Robinson--Schensted algorithm applied to
$\word(\lambda) = (a_1,\dots,a_n)$.  Then \cite[Th\'eor\`eme 1]{Jo1}
says that for $\lambda, \mu \in \t^*$ we have
\begin{equation} \label{e:jos}
    \Ann L(\lambda,\b) = \Ann L(\mu,\b)
    \text{ if and only if }
    \bar{\RS(\lambda)} = \bar{\RS(\mu)}.
\end{equation}
We have that $W = S_n$ acts on words of length $n$ and elements of
$\t^*$ in the usual way. Let $w_\sigma \in S_n$ be the permutation
such that $w_\sigma \cdot \word(K) = \word(\sigma \cdot K)$ and
define $w_{s_k\sigma}$ similarly. So we have $\b_\sigma = w_\sigma
\cdot \b$ and $\b_{s_k\sigma} = w_{s_k\sigma} \cdot \b$.

Let $s_k \star B$ denote an element in $s_k \star \bar B$
such that if $a, b$ lie in the same row as $s_k \star B$ then
$a \not \geq b$.
Now we have $\RS(\bar B) = \bar {\RS(w_\sigma^{-1} \cdot \lambda_{B,\sigma})}$
and $\RS(s_k \star \bar B) = \bar{\RS(w_{s_k\sigma}^{-1} \cdot \lambda_{s_k
\star B,s_k\sigma})}$.
By Lemma \ref{L:RSswap}, we have $\RS(\bar B) = \RS(s_k \star \bar
B)$, so we get $\bar{\RS(w_\sigma^{-1} \cdot
\lambda_{B,\sigma})} = \bar{\RS(w_{s_k\sigma}^{-1} \cdot
\lambda_{s_k \star B,s_k\sigma})}$, which implies that $\Ann
L(w_\sigma^{-1} \cdot \lambda_{B,\sigma},\b) = \Ann L(w_{s_k
\sigma}^{-1} \cdot \lambda_{s_k \star B, s_k\sigma},\b)$ by
\eqref{e:jos}. Hence, by Theorem \ref{T:hwgen} we have that
$[L_\sigma(\bar B)]$ and $[L_{s_k\sigma}(\bar B)]$ lie in the same
orbit of $C(e)$ in $\Irr_0U(\g,e)$.  We note that the condition
imposed on $B$ means that $\lan\lambda_{B'},\alpha^\vee\ran \not\in
\Z_{> 0}$ for all $\alpha \in \Phi_0^+$, and similarly for $s_k
\star B$, so that we can apply Theorem \ref{T:hwgen}. Since, $C(e)$
is trivial, we have $L_\sigma(\bar B) \iso L_{s_k\sigma}(s_k \star
\bar B)$.

Now let $\tau \in S_m$ then by writing $\tau$ as a product of simple
reflections we can define $\tau \star \bar B$ as in
\eqref{e:extend}.
By induction we have that each of the row swapping operations is
defined, and that
\begin{equation} \label{e:Aiso}
L_\sigma(\bar B) \iso L_{\tau\sigma}(\tau \star \bar B).
\end{equation}
Then we also see that $\tau \star \bar B$ does not depend on the
choice of the expression of $\tau$ in terms of simple reflection,
because $L_\sigma(\bar B) \iso L_{\tau\sigma}(\tau \star \bar B)$.
This means that the $\star$-action is a well defined action of $S_m$
on $\cX^+(F)$ giving (i).  Then (ii) is just \eqref{e:Aiso}.
\end{proof}

We state the following corollary, which is an immediate consequence
of Theorem \ref{T:hwA} and Lemma \ref{L:RSswap}.

\begin{Corollary}
Let $\sigma,\tau \in S_m$ and $\bar B \in \Row^+(\sigma \cdot F)$,
$\bar B' \in \Row^+(\tau\sigma \cdot F)$.  Then $L_\sigma(\bar B)
\iso L_{\tau\sigma}(\bar B)$ if and only if $\RS(\bar B) = \RS(\bar
B')$.
\end{Corollary}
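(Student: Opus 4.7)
The plan is to reduce the statement to an iterated application of Lemma \ref{L:RSswap}, using Theorem \ref{T:hwA}(ii) to translate the isomorphism question into a statement about $\star$-orbits. More precisely, by Theorem \ref{T:hwA}(ii) we have $L_\sigma(\bar B) \iso L_{\tau\sigma}(\bar B')$ if and only if $\bar B' = \tau \star \bar B$, so it suffices to prove
\[
\bar B' = \tau \star \bar B \quad \Longleftrightarrow \quad \RS(\bar B) = \RS(\bar B').
\]

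To set up the induction, I would fix a reduced expression $\tau = s_{i_1} s_{i_2} \cdots s_{i_l}$ and define intermediate classes $\bar B_0 = \bar B$ and $\bar B_j = s_{i_{l-j+1}} \star \bar B_{j-1}$ for $1 \le j \le l$; each $\star$-operation is defined by the analysis carried out in the proof of Theorem \ref{T:hwA}, and $\bar B_l = \tau \star \bar B$. Applying the ``$\Rightarrow$'' direction of Lemma \ref{L:RSswap} at each step gives $\RS(\bar B_{j-1}) = \RS(\bar B_j)$, and hence $\RS(\tau \star \bar B) = \RS(\bar B)$ by telescoping.

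For the forward direction of the corollary, if $\bar B' = \tau \star \bar B$ then the identity just derived gives $\RS(\bar B') = \RS(\tau \star \bar B) = \RS(\bar B)$. For the backward direction, suppose $\RS(\bar B) = \RS(\bar B')$. Consider $\bar C = \bar B_{l-1} = s_{i_2} \star \cdots \star s_{i_l} \star \bar B$, which lies in $\Row((s_{i_2}\cdots s_{i_l}\sigma) \cdot F)$, while $\bar B'$ lies in $\Row(\tau\sigma \cdot F) = \Row(s_{i_1} \cdot ((s_{i_2}\cdots s_{i_l}\sigma) \cdot F))$. The telescoping argument above gives $\RS(\bar C) = \RS(\bar B) = \RS(\bar B')$, so the ``$\Leftarrow$'' direction of Lemma \ref{L:RSswap}, applied to the simple reflection $s_{i_1}$ and the pair $(\bar C, \bar B')$, forces $\bar B' = s_{i_1} \star \bar C = \tau \star \bar B$, as required.

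The only mildly subtle point, and perhaps the main obstacle, is verifying that at each step the relevant $\star$-operation is actually defined, so that Lemma \ref{L:RSswap} applies; this is exactly the content established in the proof of Theorem \ref{T:hwA}, where it is shown that for any $\bar B \in \cX^+(F)$ and any simple reflection $s_k$, $s_k \star \bar B$ is defined. Once this is in hand the argument is a clean induction on the length of $\tau$, and no further input is needed.
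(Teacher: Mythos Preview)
Your proposal is correct and matches the paper's approach: the paper simply states that the corollary is ``an immediate consequence of Theorem \ref{T:hwA} and Lemma \ref{L:RSswap}'' without spelling out details, and your argument is precisely the natural way to unpack this --- translate the isomorphism condition via Theorem \ref{T:hwA}(ii) into $\bar B' = \tau \star \bar B$, then use Lemma \ref{L:RSswap} iteratively along a reduced word for $\tau$.
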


As an example of the star-action, we give $\sigma \star \bar A$,
where
\[
A = \begin{array}{c}
\begin{picture}(60,80)
  \put(0,0){\line(1,0){60}}
  \put(0,20){\line(1,0){60}}
  \put(0,40){\line(1,0){60}}
  \put(20,60){\line(1,0){20}}
  \put(20,80){\line(1,0){20}}
  \put(0,0){\line(0,1){40}}
  \put(20,0){\line(0,1){80}}
  \put(40,0){\line(0,1){80}}
  \put(60,0){\line(0,1){40}}
   \put(28,10){\makebox(0,0){{-1}}}
   \put(30,30){\makebox(0,0){{1}}}
   \put(50,30){\makebox(0,0){{3}}}
   \put(8,30){\makebox(0,0){{-3}}}
   \put(50,10){\makebox(0,0){{2}}}
   \put(8,10){\makebox(0,0){{-4}}}
   \put(28,50){\makebox(0,0){{-2}}}
   \put(30,70){\makebox(0,0){{4}}}
   \end{picture}
   \end{array}
   \]
and $\sigma = (123)$.  Then we have
\[
\begin{array}{c}
\begin{picture}(60,80)
  \put(0,0){\line(1,0){60}}
  \put(0,20){\line(1,0){60}}
  \put(20,40){\line(1,0){20}}
  \put(0,60){\line(1,0){60}}
  \put(0,80){\line(1,0){60}}
  \put(0,0){\line(0,1){20}}
  \put(0,60){\line(0,1){20}}
  \put(20,0){\line(0,1){80}}
  \put(40,0){\line(0,1){80}}
  \put(60,0){\line(0,1){20}}
  \put(60,60){\line(0,1){20}}
   \put(8,10){\makebox(0,0){{-4}}}
   \put(28,10){\makebox(0,0){{-1}}}
   \put(50,10){\makebox(0,0){{2}}}
   \put(28,30){\makebox(0,0){{-3}}}
   \put(30,50){\makebox(0,0){{3}}}
   \put(8,70){\makebox(0,0){{-2}}}
   \put(30,70){\makebox(0,0){{1}}}
   \put(50,70){\makebox(0,0){{4}}}
   \end{picture}
   \end{array} \in \sigma \star \bar A.
\]
Note that $A$ is row equivalent to column strict, yet $\sigma \star
\bar A$ does not contain any column strict elements.  So for
different highest weight theories the classification of finite
dimensional irreducible $U(\g,e)$-module is not just that
$L_\sigma(\bar A)$ is finite dimensional if and only if $\bar A$
contains a column strict table.

\begin{Remark} \label{R:restA}
Let $\sigma, \tau \in S_m$ and suppose that $\tau\sigma \cdot F =
\sigma \cdot F$, i.e.\ $\tau$ permutes rows $\sigma \cdot F$ of the
same length.  Then as explained in \cite[Section 6]{BruG}, there
exists an element of the restricted Weyl group $w \in W^e$ such that
$w \cdot \q_\sigma = \q_{\tau\sigma}$.  Thus for any $\bar B \in
\Row^{+}(\sigma \cdot F)$, we have $L_\sigma(\bar B) \iso
L_{\tau\sigma}(\bar B)$ by Proposition \ref{P:restchange}.  This can
also be easily verified by noting that a row swapping operation on
two rows of the same length is trivial.
\end{Remark}

\section{Changing highest weight theories associated to
even multiplicity finite $W$-algebras} \label{S:evenchange}

In this section we prove Theorem \ref{T:changehwevenmult}, which
tells us how to pass between different highest weight theories when
$\g$ is of type C or D and $e$ is even multiplicity.  We recall some
definitions from \cite[Section 4]{BroG} regarding s-frames and
s-tables in \S\ref{ss:sframe}. Then we use s-tables to give the
notation for finite $W$-algebras in \S\ref{ss:setupclass} and the
combinatorics for the description of the highest weight theories in
\S\ref{ss:classhwthy}. In the remaining subsections we review the
classification of finite dimensional $U(\g,e)$-modules from
\cite[Section 5]{BroG} and describe the bijection between
parameterizing sets for different highest weight theories.

In this section we often consider sets of the form
$\{1,2,\dots,l,-l,\dots,-2,-1\}$ and we use the unconventional total
order on this set given by  $1 \le 2 \le \dots \le l \le -l \le
\dots \le -2 \le -1$.

\subsection{s-frames and s-tables} \label{ss:sframe}

The combinatorics for the highest weight theories for finite
$W$-algebras associated to even multiplicity nilpotent elements in
classical Lie algebra algebras involves a skew-symmetric version of
tables
called s-tables.  Below we
review the terminology for s-frames and s-tables from
\cite[\S4.4]{BroG}.

We define an {\em s-frame} to be a frame where the boxes, are
arranged symmetrically around the origin. We say that an s-frame is
a {\em symmetric pyramid} if the row lengths weakly decrease from the
centre outwards; we note that a symmetric pyramid is uniquely
determined by its row lengths. In this paper we only consider
s-frames which have an even number of rows.

An example of an s-frame (which is not a symmetric pyramid) is
$$
    \begin{array}{c}
\begin{picture}(60,80)
\put(0,0){\line(1,0){60}} \put(0,20){\line(1,0){60}}
\put(10,40){\line(1,0){40}} \put(0,60){\line(1,0){60}}
\put(0,80){\line(1,0){60}} \put(0,0){\line(0,1){20}}
\put(20,0){\line(0,1){20}} \put(40,0){\line(0,1){20}}
\put(60,0){\line(0,1){20}} \put(10,20){\line(0,1){40}}
\put(30,20){\line(0,1){40}} \put(50,20){\line(0,1){40}}
\put(0,60){\line(0,1){20}} \put(20,60){\line(0,1){20}}
\put(40,60){\line(0,1){20}} \put(60,60){\line(0,1){20}}
\put(30,40){\circle*{3}}
\end{picture}
\end{array}.
$$

We define an {\em s-table} to be an s-frame for which every box is
filled with a complex number. Furthermore we require that the boxes
be filled skew-symmetrically with respect to the centre. Given an
$s$-frame $F$, we write $\sTab(F)$ for the set of s-tables with
frame $F$. We write $\bar A^s = \bar A \cap \sTab(F)$ for the set of
s-tables row equivalent to $A$.  For example
\begin{equation} \label{exstable}
\begin{array}{c}
\begin{picture}(80,80)
\put(20,0){\line(1,0){40}} \put(0,20){\line(1,0){80}}
\put(0,40){\line(1,0){80}} \put(0,60){\line(1,0){80}}
\put(20,80){\line(1,0){40}} \put(20,0){\line(0,1){20}}
\put(40,0){\line(0,1){20}} \put(60,0){\line(0,1){20}}
\put(60,20){\line(0,1){20}} \put(80,20){\line(0,1){20}}
\put(60,40){\line(0,1){20}} \put(80,40){\line(0,1){20}}
\put(20,80){\line(0,-1){20}} \put(40,80){\line(0,-1){20}}
\put(60,80){\line(0,-1){20}} \put(0,20){\line(0,1){40}}
\put(20,20){\line(0,1){40}} \put(40,20){\line(0,1){40}}
\put(30,70){\makebox(0,0){{-7}}} \put(50,70){\makebox(0,0){{3}}}
\put(10,50){\makebox(0,0){{-8}}} \put(30,50){\makebox(0,0){{-4}}}
\put(50,50){\makebox(0,0){{2}}} \put(70,50){\makebox(0,0){{5}}}
\put(8,30){\makebox(0,0){{-5}}} \put(28,30){\makebox(0,0){{-2}}}
\put(48,30){\makebox(0,0){{4}}} \put(68,30){\makebox(0,0){{8}}}
\put(28,10){\makebox(0,0){{-3}}} \put(48,10){\makebox(0,0){{7}}}
\put(40,40){\circle*{3}}
\end{picture}
\end{array}
 \in \sTab^\leq(F),
\end{equation}
where $F$ is its s-frame.   A piece of notation that we require
later is as follows. Given a sign $\phi \in \{\pm\}$, we define
\[
  \sTab_\phi(F) =
  \begin{cases}
      \{A \in \sTab(F) \mid A \text{ has all entries in $\Z$ or all entries in $\frac{1}{2} + \Z$}\} & \text{if $\phi = +$} ; \\
      \{ A \in \sTab(F) \mid  \text{$A$ has all entries in $\Z$}\} & \text{if $\phi =
-$}.
  \end{cases}
\]
The subset of $\sTab_\phi(F)$ consisting of s-tables with entries
weakly increasing along rows is denoted by $\sTab_\phi^\leq(F)$.

Let $F$ be an s-frame and $A \in \sTab(F)$.
By assumption, $F$ has an even number of rows, say $2m$. We label
the rows of $F$ and $A$ with $1, \dots, m, -m, \dots, -1$ from
bottom to top. Given $i = \pm 1,\dots,\pm m$ we write $A_i$ for row
of $A$ labelled by $i$, and for $i > 0$ we write $A_{-i}^i$ for the
s-table obtained by removing rows $\pm 1,\dots, \pm (i-1)$. The
table obtained from $A$ by removing all boxes below the central
point is denoted by $A_+$. For example if $A$ is the table above,
then
$$
    A_+ =
    \begin{array}{c}
\begin{picture}(80,40)
\put(0,0){\line(0,1){20}} \put(20,0){\line(0,1){40}}
\put(40,0){\line(0,1){40}} \put(60,0){\line(0,1){40}}
\put(80,0){\line(0,1){20}} \put(0,0){\line(1,0){80}}
\put(0,20){\line(1,0){80}} \put(20,40){\line(1,0){40}}
\put(30,30){\makebox(0,0){{-7}}} \put(50,30){\makebox(0,0){{3}}}
\put(10,10){\makebox(0,0){{-8}}} \put(30,10){\makebox(0,0){{-4}}}
\put(50,10){\makebox(0,0){{2}}} \put(70,10){\makebox(0,0){{5}}}
\end{picture}
\end{array}
.
$$

Finally in this subsection we generalize the row swapping procedure
to s-tables. As above let $F$ be an s-frame with $2m$ rows, and let
$A \in \sTab_\phi^\leq(F)$, where $\phi \in \{\pm\}$. Let $k =
1,\dots,m-1$. Then we define
$$
\bar s_k \star A = s_{-k} \star (s_k \star A),
$$
where $s_k$ swaps rows $k$ and $k+1$ as defined in
\S\ref{ss:frames}, and $s_{-k}$ swaps rows $-(k+1)$ and $-k$ using
the same rules. Here we define the row swapping operations directly
on elements of $\sTab_\phi^\leq(F)$ rather than or row equivalence
classes, because there is a unique element of $\sTab_\phi^\leq(F)$
in any row equivalence class. We note that $s_k \star A$ is defined
if and only if $s_{-k} \star A$ is defined, and that the operators
$s_k$ and $s_{-k}$ commute.
Also we note that when $s_k$ is defined, the action of $s_{-k}$ is
``dual'' to that of $s_k$, so $\bar s_k \star A$ is an s-table.

\subsection{Notation for even multiplicity finite $W$-algebras} \label{ss:setupclass}

For the rest of this section, we fix a sign $\phi \in \{\pm\}$.  As
a shorthand we say that an integer $l$ is $\phi$-even if $\phi = +$
and $l$ is even or $\phi = -$ and $l$ is odd; we define $\phi$-odd
similarly.

We specify coordinates for $\sp_{2n}$ and $\so_{2n}$. Let $V =
\C^{2n}$ be the $2n$-dimensional vector space with standard basis
$\{e_1,\dots,e_n,e_{-n},\dots,e_{-1}\}$ and nondegenerate bilinear
form $(\cdot,\cdot)$ defined by $(e_i,e_j) = 0$ if $i$ and $j$ have
the same sign, and $(e_i,e_{-j}) = \delta_{i,j}$, $(e_{-i},e_j) =
\phi \delta_{i,j}$ for  $i,j = 1,\dots,n$.  Let $\tilde G =
G^\phi_{2n} = \{x \in \GL_{2n} \mid (xv|xv') = (v|v') \text{ for all
} v,v' \in V\}$, and $\g = \g_{2n}^\phi = \{x \in \gl_{2n} \mid
(xv|v') = -(v|xv') \text{ for all } v,v' \in V\}$ be the Lie algebra
of $\tilde G$. So $\tilde G = \operatorname{O}_{2n}$ and $\g =
\so_{2n}$ if $\phi = +$, and $\tilde G = \Sp_{2n}$ and $\g =
\sp_{2n}$ if $\phi = -$. We write $G$ for the identity component
group of $\tilde G$, so $G = \tilde G$ in the type C case, and $G =
\SO_{2n}$ in the type D case.  We let $(\cdot | \cdot)$ be the trace
form on $\g$.

Let $\{e_{i,j} \mid i,j = 1,\dots,n,-n,\dots,-1\}$ be the standard
basis of $\gl_{2n}$, and define $f_{i,j} = e_{i,j} - \eta_{i,j}
e_{-j,-i}$ where $\eta_{i,j} = 1$ if $i$ and $j$ have the same sign
and $\eta_{i,j} = \phi$ if $i$ and $j$ have different signs.  Then
the standard basis of $\g$ is $\{ f_{i,j} \mid i  < -j \}$ if $\phi
= +$ and $\{ f_{i,j} \mid i \leq -j\}$ if $\phi = -$, where we use
the order given by $1 \le 2 \le \dots \le n \le -n \le \dots \le -2
\le -1$ . Let $\t = \lan f_{i,i} \mid i = 1,\dots,n \ran$ be the
standard Cartan subalgebra of $\g$ of diagonal matrices. We define
$\{\eps_i \mid i = 1,\dots,n\}$ to be the basis of $\t^*$ dual to
$\{f_{i,i} \mid i = 1,\dots,n\}$.

We recall that nilpotent $\tilde G$-orbits in $\g$ are parameterized by
partitions $\bp$, such that each $\phi$-even part of $\bp$ has even
multiplicity when $\g = \so_{2n}$.  For $\g = \so_{2n}$,
we also recall that a nilpotent $\tilde G$-orbit parameterized by $\bp$ is a
single $G$-orbit unless all parts of $\bp$ are even and of even
multiplicity.  In this latter case, where we say that $\bp$ is {\em
very even}, the $\tilde G$-orbit parameterized by $\bp$ splits into two
$G$-orbits.

We recall the structure of the component group $\tilde C(e)$ of the
centralizer of $e$ in $\tilde G$.  Suppose $e \in \g$ lies in the
nilpotent $\tilde G$-orbit corresponding to the partition $\bp$.
Then $\tilde C(e) \iso \Z_2^d$, where $d$ is the number of distinct
$\phi$-odd parts of $\bp$, see for example \cite[\S3.13]{Ja}.
We note that $C(e)$ is equal to $\tilde C(e)$
unless $\g = \so_{2n}$ and $\bp$ has an odd part, in which case $C(e)$
has index 2 in $\tilde C(e)$.

For the remainder of this section we fix an even multiplicity
partition $\bp = (p_1^2,\dots,p_r^2)$ of $2n$, where $p_i \ge
p_{i+1}$ for each $i$. The {\em symmetric pyramid} of $\bp$ is the
symmetric pyramid with row lengths given by $\bp$ as defined in
\S\ref{ss:sframe}; we write $P = P_\bp$ for this s-frame. The table
with frame $P$ and with boxes filled by $1,\dots,n,-n,\dots,-1$ from
left to right and top to bottom is called the {\em coordinate
pyramid of $\bp$} and denoted by $K = K_\bp$.  For example
$$
K =
\begin{array}{c}
\begin{picture}(60,80)
\put(10,0){\line(1,0){40}} \put(0,20){\line(1,0){60}}
\put(0,40){\line(1,0){60}} \put(0,60){\line(1,0){60}}
\put(10,80){\line(1,0){40}} \put(10,0){\line(0,1){20}}
\put(30,0){\line(0,1){20}} \put(50,0){\line(0,1){20}}
\put(0,20){\line(0,1){40}} \put(20,20){\line(0,1){40}}
\put(40,20){\line(0,1){40}} \put(60,20){\line(0,1){40}}
\put(10,60){\line(0,1){20}} \put(30,60){\line(0,1){20}}
\put(50,60){\line(0,1){20}}
\put(30,40){\circle*{3}} \put(10,50){\makebox(0,0){{3}}}
\put(30,50){\makebox(0,0){{4}}} \put(50,50){\makebox(0,0){{5}}}
\put(20,70){\makebox(0,0){{1}}} \put(40,70){\makebox(0,0){{2}}}
\put(18,10){\makebox(0,0){{-2}}} \put(38,10){\makebox(0,0){{-1}}}
\put(8,30){\makebox(0,0){{-5}}} \put(28,30){\makebox(0,0){{-4}}}
\put(48,30){\makebox(0,0){{-3}}}
\end{picture}
\end{array}
$$
is a coordinate table.

We define the nilpotent element $e \in \g$ with Jordan type $\bp$ by
$e = \sum f_{i,j}$, where we sum over all $i,j$ such that $i$ and
$j$ are positive and $j$ is in the box immediately to the right of
$i$ in $K$.  We write $\col(i)$ for the $x$-coordinate of the box in
$K$ containing $i$ and we define $h  = \sum_{i=1}^n - \col(i)
f_{i,i}$. For example, if $K$ is as above, we have $e = f_{1,2} +
f_{3,4} + f_{4,5}$ and $h = -f_{1,1} + f_{2,2} - 2f_{3,3}+2f_{5,5}$.
Then the $\ad h$ eigenspace decomposition gives the Dynkin grading
$$
\g(k) = \lan f_{i,j} \mid \col(j) - \col(i) = k \ran.
$$
The finite $W$-algebra $U(\g,e)$ can now be
defined as in \S\ref{ss:Wdef}.

We do not consider other good gradings for $e$ here, as there are
not many non-Dynkin good gradings, so it is not particularly
advantageous to do so; we refer the reader to \cite[Sections 5 and
6]{EK} and \cite[Sections 6 and 7]{BruG} for more information on
good gradings for classical Lie algebras.

\subsection{Highest weight theories for $U(\g,e)$} \label{ss:classhwthy}

We now discuss highest weight theories for $U(\g,e)$.  We continue
to use the notation from the previous subsection; in particular, $P$
is the symmetric pyramid of $\bp$ and $K$ is the coordinate pyramid
of $\bp$.  First we consider the highest weight theory for a
particular choice $\q$ of parabolic subalgebra, then we give the
notation for other choices of parabolic subalgebra.

For $i = \pm 1,\dots,\pm n$ we write $\row(i)$ for the row of $K$ in
which $i$ appears; recall that rows in $P$ are labelled with
$-m\dots,-1,1,\dots,m$ from bottom to top. Then we have
$$
\g_0 = \lan f_{i,j} \mid \row(i) = \row(j) \ran,
$$
and
$$
\b_0 = \lan f_{i,j} \mid \row(i) = \row(j) \text{ and } \col(i) \le
\col(j) \ran
$$
Let
$$
\q = \lan f_{i,j} \mid \row(i) \le \row(j) \ran,
$$
which is a parabolic subalgebra of $\g$ with Levi subalgebra $\g_0$;
here we are using the ordering $1 \le 2 \le \dots \le m \le -m \le
\dots \le -2 \le -1$.

To each $A \in \sTab_\phi^{\le}(F)$ we associate a weight $\lambda_A
= \sum a_i \eps_i \in \t^*$, where $a_i$ is the number in the box of
$A$ which occupies the same position as $i$ in $K$.  For example,
with $K$ as above and
\begin{equation} \label{e:stableex}
  A =  \begin{array}{c}
\begin{picture}(60,80)
\put(10,0){\line(1,0){40}} \put(0,20){\line(1,0){60}}
\put(0,40){\line(1,0){60}} \put(0,60){\line(1,0){60}}
\put(10,80){\line(1,0){40}} \put(10,0){\line(0,1){20}}
\put(30,0){\line(0,1){20}} \put(50,0){\line(0,1){20}}
\put(0,20){\line(0,1){40}} \put(20,20){\line(0,1){40}}
\put(40,20){\line(0,1){40}} \put(60,20){\line(0,1){40}}
\put(10,60){\line(0,1){20}} \put(30,60){\line(0,1){20}}
\put(50,60){\line(0,1){20}}
\put(30,40){\circle*{3}} \put(8,50){\makebox(0,0){{-3}}}
\put(30,50){\makebox(0,0){{1}}} \put(50,50){\makebox(0,0){{4}}}
\put(20,70){\makebox(0,0){{2}}} \put(40,70){\makebox(0,0){{7}}}
\put(18,10){\makebox(0,0){{-7}}} \put(38,10){\makebox(0,0){{-2}}}
\put(8,30){\makebox(0,0){{-4}}} \put(28,30){\makebox(0,0){{-1}}}
\put(50,30){\makebox(0,0){{3}}}
\end{picture}
\end{array},
\end{equation}
we have
$$
\lambda_A =  -3\eps_1 + \eps_2+ 4\eps_3 + 2\eps_4 + 7\eps_5.
$$
Let $\Lambda_A$ be the $W_0$-orbit of $\lambda_A$.  We note that
$W_0$ is isomorphic to $S_{p_1} \times \dots S_{p_m}$ and the action
of $W_0$ on $\t^*$ corresponds to $W_0$ acting on tables by
permuting entries in rows. Thus $\Lambda_A$ corresponds to the row
equivalence class $\bar A^s$ of $A$. We write $L(A)$ for the highest
weight irreducible $U(\g,e)$-module $L(\Lambda_A,\q)$, as defined in
\S\ref{ss:hwthy}. Later, in Theorem \ref{T:BroGmain}, we state the
main theorem from \cite{BroG}, which determines when $L(A)$ is
finite dimensional.

We note that the restriction to tables in $\sTab_\phi(P)$
corresponds to the central character of $L(A)$ being integral. Also
as we use tables in $\sTab_\phi^\le(P)$ there is no need to use the
row equivalence class in the notation for $L(A)$.

We now give the notation for highest weight theories corresponding
to other choices of parabolic subalgebra. Let $W_m$ denote the Weyl
group of type $B_m$ acting on $\{\pm 1,\dots,\pm m\}$ in the usual
way.
We write $\bar S_m$ for the subgroup of $W_m$ isomorphic to $S_m$
consisting of the permutations with no sign changes.  The standard
generators of $W_m$ are denoted by $r,\bar s_1,\dots,\bar s_{n-1}$,
where $r$ is the transposition $(n,-n)$ and $\bar s_1,\dots,\bar
s_{n-1}$ are the standard generators of $\bar S_m$, so $\bar s_i =
(i,i+1)(-i,-i-1)$. Given $\sigma \in \bar S_m$ we write $\underline
\sigma$ for the corresponding element of $S_m$.

For $\sigma \in W_m$, we define $\sigma \cdot P$ to be the frame
obtained from $P$ by permuting rows according to $\sigma$ and define
$\sigma \cdot K$ similarly. For example for $K$ as in \eqref{e:coord} and
$\sigma = (1,-2)(2,-1) \in W_2$, we have
$$
    \sigma \cdot K =
    \begin{array}{c}
\begin{picture}(60,80)
\put(0,0){\line(1,0){60}} \put(0,20){\line(1,0){60}}
\put(10,40){\line(1,0){40}} \put(0,60){\line(1,0){60}}
\put(0,80){\line(1,0){60}} \put(0,0){\line(0,1){20}}
\put(20,0){\line(0,1){20}} \put(40,0){\line(0,1){20}}
\put(60,0){\line(0,1){20}} \put(10,20){\line(0,1){40}}
\put(30,20){\line(0,1){40}} \put(50,20){\line(0,1){40}}
\put(0,60){\line(0,1){20}} \put(20,60){\line(0,1){20}}
\put(40,60){\line(0,1){20}} \put(60,60){\line(0,1){20}}
\put(30,40){\circle*{3}} \put(8,70){\makebox(0,0){{-5}}}
\put(28,70){\makebox(0,0){{-4}}} \put(48,70){\makebox(0,0){{-3}}}
\put(18,50){\makebox(0,0){{-2}}} \put(38,50){\makebox(0,0){{-1}}}
\put(20,30){\makebox(0,0){{1}}} \put(40,30){\makebox(0,0){{2}}}
\put(10,10){\makebox(0,0){{3}}} \put(30,10){\makebox(0,0){{4}}}
\put(50,10){\makebox(0,0){{5}}}
\end{picture}
\end{array}
$$

Let $\sigma \in W_m$ and $i = \pm 1,\dots, \pm n$.  We write $\row_\sigma(i)$ for the row of
$\sigma \cdot K$ that contains $i$. We can define $e$, $\g(k)$,
$\g_0$ and $\b_0$ from $\sigma \cdot K$ in exactly the same way as
we defined them from $K$. We define
$$
\q_\sigma = \lan e_{i,j} \mid \row_\sigma(i) \le \row_\sigma(j)
\ran.
$$
Then $\q_\sigma$ is a parabolic subalgebra of $\g$ with Levi
subalgebra $\g_0$.  Moreover, it is easy to see that any parabolic subalgebra of $\g$ with
Levi subalgebra $\g_0$ occurs in this way for some $\sigma \in W_m$.

For $B \in \sTab_\phi^\le(\sigma \cdot P)$ we define
$\lambda_{B,\sigma} = \sum b_i \eps_i \in \t^*$, where $b_i$ is the
number in the box of $B$ which occupies the same position as $i$ in
$\sigma \cdot K$. Let $\Lambda_{B,\sigma}$ be the $W_0$-orbit of
$\lambda_{B,\sigma}$. We write $L_\sigma(B)$ for the highest weight
irreducible $U(\g,e)$-module $L(\Lambda_{B,\sigma},\q_\sigma)$.  We
define
$$
\sTab_\phi^{+}(\sigma \cdot P) = \{B \in \sTab_\phi^{\le}(\sigma
\cdot P) \mid \text{$L_\sigma(B)$ is finite dimensional}\}
$$
and
$$
\cX_\phi^+(P) = \bigcup_{\sigma \in W_m} \sTab_\phi^{+}(\sigma \cdot
P).
$$

\subsection{Changing the highest weight theory ``in the top half''}

In this section we begin to show how to pass between different
highest weight theories, where the change involves permuting rows
according to an element of $\bar S_m \sub W_m$, i.e.\ permuting rows
in the top half.  In the statement we use the $\star$-action of $\bar
S_m$ on $\cX_\phi^+(P)$ defined by extending the row swapping
operations $\bar s_k$ from \S\ref{ss:sframe} in analogy to
\eqref{e:extend}.

\begin{Proposition} \label{P:tophalf} $ $
\begin{enumerate}
\item[(i)]The $\star$-action of $\bar S_m$ on $\cX_\phi^+(P)$ is well defined.
\item[(ii)]
Let $\sigma \in W_m$, $\tau \in \bar S_m$, and $B \in
\sTab^+_\phi(\sigma \cdot P)$, $B' \in \sTab^+_\phi(\tau\sigma \cdot
F)$.
Then $L_\sigma(B) \iso L_{\tau\sigma}(B')$ if
and only if $B' = \tau \star B$.
\end{enumerate}
\end{Proposition}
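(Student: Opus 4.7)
The plan is to adapt the scheme of the proof of Theorem~\ref{T:hwA}: handle the generators $\bar s_k$ of $\bar S_m$ first and then extend to all of $\bar S_m$, with (i) and the converse direction of (ii) extracted from \cite[Theorem 4.5]{BGK}. Where the type-$A$ proof used Joseph's theorem to match annihilators of highest weight $U(\g)$-modules, the type C/D argument will invoke Theorem~\ref{T:hwA} itself, applied inside a carefully chosen type-$A$ Levi factor via the parabolic transitivity relation~\eqref{e:translevi}.

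For the generator case, fix $\sigma \in W_m$, $1 \le k < m$ and $B \in \sTab_\phi^+(\sigma \cdot P)$. I would choose a full subalgebra $\s \sub \t^e$ so that the Levi $\g^\s$ of $\g$ has a single type-$A$ block acting on the four rows of $\sigma \cdot P$ indexed by $\pm k$ and $\pm (k+1)$, with all other opposite row-pairs kept as separate $\gl$-blocks.  Then $\g^\s$ is a product of general linear Lie algebras, $e$ is regular in $\g^\s$, and $U(\g^\s,e)$ decomposes as a corresponding tensor product.  By~\eqref{e:translevi} the module $L^\s(\Lambda_{B,\sigma},\q_\sigma)$ is finite dimensional, and it factors as a tensor product of irreducible highest weight modules over these tensor factors.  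The factor inside the distinguished type-$A$ piece is, up to a central shift, the type-$A$ highest weight module labelled by the two-row sub-table $B^k_{k+1}$ (the bottom-half rows $-(k+1),-k$ being determined by skew-symmetry).  Applying Theorem~\ref{T:hwA} to this factor shows that $s_k \star B^k_{k+1}$ is defined and produces an isomorphic type-$A$ module; reassembling the tensor factors and applying~\eqref{e:translevi} in reverse then yields
\[
L_\sigma(B) \iso L_{\bar s_k \sigma}(\bar s_k \star B),
\]
where we use the skew-symmetric extension $\bar s_k \star B = s_{-k} \star s_k \star B$ built into the definition.

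Granted the generator case, part (i) follows formally: given two expressions of $\tau \in \bar S_m$ as a product of simple reflections, iteration of the generator case produces elements of $\sTab_\phi^\le(\tau\sigma\cdot P)$ which both label the module $L_\sigma(B)$ via the highest weight theory for $\q_{\tau\sigma}$, so they coincide by uniqueness of $\sTab_\phi^\le$-representatives in row-equivalence classes together with \cite[Theorem 4.5]{BGK}.  The forward direction of (ii) is a direct iteration, and the converse is an appeal to \cite[Theorem 4.5]{BGK}: if $L_{\tau\sigma}(B') \iso L_\sigma(B) \iso L_{\tau\sigma}(\tau \star B)$, then $B' = \tau \star B$.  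The principal technical obstacle lies in the generator case, namely verifying that the distinguished tensor factor of $L^\s(\Lambda_{B,\sigma},\q_\sigma)$ really is, up to an explicit central shift, the type-$A$ highest weight module labelled by $B^k_{k+1}$.  This is a careful bookkeeping exercise, tracking the $\rho$-shifts between $\g$ and the type-$A$ Levi factor, that closely parallels the analogous step in the proof of Theorem~\ref{T:hwA}.
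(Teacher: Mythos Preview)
Your approach is correct and rests on the same core idea as the paper---reduce to type $A$ via a full subalgebra $\s \sub \t^e$ and the transitivity relation~\eqref{e:translevi}---but the paper makes a more economical choice of $\s$.  Rather than treating each generator $\bar s_k$ with its own Levi $\gl_{p_a+p_b} \times \prod_{j\neq a,b}\gl_{p_j}$, the paper sets $\s = \langle t \rangle$ with $t = \sum_{\row_\sigma(i)>0} f_{i,i}$, so that $\g^\s \cong \gl_n$ is a single type-$A$ block containing all of $\g_0$.  Then $(\q_\sigma)_\s = (\q_{\tau\sigma})_\s$ for every $\tau \in \bar S_m$, and $L^\s(\Lambda_{B,\sigma},\q_\sigma)$ is, up to a central shift, the type-$A$ module $L_{\underline\sigma}(B_+)$ labelled by the whole top half of $B$.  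A single appeal to Theorem~\ref{T:hwA} then delivers both the well-definedness of $\tau \star B$ and the isomorphism $L_\sigma(B)\cong L_{\tau\sigma}(\tau\star B)$ in one stroke, avoiding the iteration over generators and the separate argument for~(i) that your route requires.  One small slip: $e$ is not regular in your $\g^\s$, since its projection to the distinguished $\gl_{p_a+p_b}$ factor has two Jordan blocks; this is harmless, as you only use the tensor decomposition of $U(\g^\s,e)$, not regularity.
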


\begin{proof}
First let $\sigma, \tau \in \bar S_m$, and $B \in
\sTab^+_\phi(\sigma \cdot P)$.
We set $t = \sum_{\row_\sigma(i) > 0} f_{i,i}$
and
$\s = \lan t \ran$, which is a full subalgebra of $\t^e$.
Then $\g^\s \iso \gl_{n}$
and we see that $(\q_\sigma)_\s = (\q_{\tau\sigma})_\s$.

Now $L_\sigma(B) = L(\Lambda_{B},\q_\sigma)$ is finite dimensional so as explained before
\eqref{e:translevi}, we have $L^{\s}(\Lambda_{B},\q_\sigma)$ is a finite
dimensional $U(\g^s,e)$-module.  We see that up to some
central shift (due to the different root systems for $\g$ and
and $\gl_n$) $L^{\s}(\Lambda_{B},\q_\sigma)$ is isomorphic to the
highest weight $U(\gl_n,e)$-module $L_{\underline \sigma}(B_+)$; this central shift corresponds
to adding a constant to all entries in $B_+$.

Now using Theorem \ref{T:hwA}, the table $\underline \tau \star B_+$
is defined. Clearly $\tau \star B$ is the s-table with $(\tau
\star B)_+ = \underline \tau \star B_+$, so, in particular, it is well defined. Again by Theorem
\ref{T:hwA}, we have $L_{\underline \sigma}(B_+) \iso L_{\underline
\tau \, \underline \sigma}(\underline \tau \star B_+)$, which
implies that $L^{\s}(\Lambda_{B},\q_\sigma) \iso
L^{\s}(\Lambda_{\tau \star B},\q_{\tau\sigma})$.  Thus as
$(\q_\sigma)_\s = (\q_{\tau\sigma})_\s$, we get that
$M_\s(L^{\s}(\Lambda_{B},\q_\sigma)) \iso M_\s(L^{\s}(\Lambda_{\tau
\star B},\q_{\tau\sigma}))$, which means that $L_\sigma(B) \iso
L_{\tau\sigma}(\tau \star B)$.
In particular, $L_{\tau\sigma}(\tau \star B)$ is finite dimensional.

In the case that $\sigma \in W_m \setminus \bar S_m$,  all of
the arguments above go through with a minor complication regarding the
identification $\g^\s \iso \gl_n$.
\end{proof}

\subsection{The component group action} \label{ss:comp}
In order to state the classification of finite dimensional
irreducible $U(\g,e)$-modules in Theorem \ref{T:BroGmain} we need to
recall the action of the component group $\tilde C(e)$ on
$\cX^+_\phi(P)$ from \cite[\S5.3]{BroG}.   In fact we complete the
verification that we do get the true action of $\tilde C(e)$, see
\cite[Remark 5.9]{BroG}. The component group action is also required
for Theorem \ref{T:changehwevenmult}, where we complete the
description of how to pass between different highest weight
theories.

The description of the action depends on the notion of the
{\em $\sharp$-element} of a list of complex numbers.
Given a list $(a_1,\dots,a_{2k+1})$ of complex numbers
let $\{(a_1^{(i)},\dots,a_{2k+1}^{(i)}) \mid i \in I\}$ be the set
of all permutations of this list which satisfy
$a^{(i)}_{2j-1}+a^{(i)}_{2j} > 0$
for each $j=1,\dots,k$. Assuming that such rearrangements exist, we
define the {\em $\sharp$-element} of $(a_1,\dots,a_{2k+1})$ to be
the unique maximal element of the set $\{a_{2k+1}^{(i)} \mid i \in
I\}$. On the other hand, if no such rearrangements exist, we say
that the $\sharp$-element of $(a_1,\dots,a_{2k+1})$ is undefined.
For example, the $\sharp$-element of $(-3, -1, 2)$ is $-3$, whereas
the $\sharp$-element of $(-3,-2,1)$ is undefined.
We abuse notation somewhat by saying that the $\sharp$-element of
a list of numbers with an even number of elements is the
$\sharp$-element of that list with $0$ inserted.

We begin by considering the case where $\bp = (n^2)$, $n$
is $\phi$-odd, and $n$ is odd if $\g = \so_{2n}$. In this case we
have $\tilde C(e) \iso \Z_2 = \lan c \ran$, and we define an
operation of $c$ on $\sTab_\phi^+(P)$ as follows. Let $A \in
\sTab^+_\phi(P)$ and let $a_1, \dots, a_n$ be the entries of row $1$ of $A$. By
\cite[Theorem 1.2]{Bro2} the $\sharp$-element of $a_1, \dots, a_n$
is defined; let $a$ be this number. We declare that $c \cdot A \in
\sTab^\leq(P)$ is the s-table obtained from $A$ by replacing one
occurrence of $a$ in row $1$ with $-a$, and one occurrence of $-a$
in row $-1$ with $a$. Then \cite[Theorem 1.3]{Bro2} says that $c
\cdot L(A) = L(c \cdot A)$; in particular, $c \cdot A \in
\sTab^+_\phi(P)$

An
example of this action is
$$
    c \cdot
    \begin{array}{c}
\begin{picture}(40,40)
\put(0,0){\line(1,0){40}}
\put(0,20){\line(1,0){40}} \put(0,40){\line(1,0){40}}
\put(0,0){\line(0,1){20}} \put(20,0){\line(0,1){20}}
\put(40,0){\line(0,1){20}} \put(0,20){\line(0,1){20}}
\put(20,20){\line(0,1){20}} \put(40,20){\line(0,1){20}}
\put(8,10){\makebox(0,0){{-2}}} \put(28,10){\makebox(0,0){{-1}}}
\put(10,30){\makebox(0,0){{1}}} \put(30,30){\makebox(0,0){{2}}}
\put(20,20){\circle*{3}}
\end{picture}
\end{array}
=
    \begin{array}{c}
\begin{picture}(40,40)
\put(0,0){\line(1,0){40}}
\put(0,20){\line(1,0){40}} \put(0,40){\line(1,0){40}}
\put(0,0){\line(0,1){20}} \put(20,0){\line(0,1){20}}
\put(40,0){\line(0,1){20}} \put(0,20){\line(0,1){20}}
\put(20,20){\line(0,1){20}} \put(40,20){\line(0,1){20}}
\put(8,10){\makebox(0,0){{-1}}} \put(30,10){\makebox(0,0){{2}}}
\put(8,30){\makebox(0,0){{-2}}} \put(30,30){\makebox(0,0){{1}}}
\put(20,20){\circle*{3}}
\end{picture}
\end{array}
.
$$

Now we define an operation of $c$ on $\cX^+(P)$ for $\bp$ any even
multiplicity partition. Let $B \in \cX^+(P)$ and let $\sigma \in
W_m$ such that $B \in \sTab^+_\phi(\sigma \cdot P)$. Suppose that
the length of row $m$ in $B$ is $\phi$-even, then we define $c \cdot
B = B$. Next suppose that the length of row $m$ in $B$ is
$\phi$-odd. Below we justify that $c \cdot B_{-m}^m$ is defined.
This allows us to define $c \cdot B$ to be the table obtained from
$B$ by replacing rows $m$ and $-m$ by $c \cdot B_{-m}^m$.

To justify that $c \cdot B_{-m}^m$ is defined we let $t_\sigma =
\sum_{i \mid \row_\sigma(i) \neq m} f_{i,i}$ and $\s_\sigma = \lan
t_\sigma \ran$, which is a full subalgebra of $\t^e$.  The Levi
subalgebra $\g^{\s_\sigma}$ is isomorphic to $\gl_{n-p_{\sigma(m)}}
\oplus \g^\phi_{2p_{\sigma(m)}}$ and the finite $W$-algebra
$U(\g^{\s_\sigma},e)$ decomposes as a tensor product
$$
U(\g^{\s_j},e) \iso U(\gl_{n-p_{\sigma(m)}},e_\sigma') \otimes
U(\g^\phi_{2p_{\sigma(m)}},e_\sigma),
$$
where $e_\sigma'$ and $e_\sigma$ denote the projections of $e$ into
$\gl_{n-p_{\sigma(m)}}$ and $\g^\phi_{2p_{\sigma(m)}}$ respectively.
As explained before \eqref{e:translevi}, we have that
$L^{\s_\sigma}_{\sigma}(B)$ is finite dimensional.  Also
$L^{\s_\sigma}_{\sigma}(B)$ is the tensor product of irreducible
highest weight modules for $U(\gl_{n-p_{\sigma(m)}},e_0)$ and
$U(\g^\phi_{2p_{\sigma(m)}},e_1)$.  The tensor factor that is a
$U(\g^\phi_{2p_{\sigma(m)}},e_j)$-module is the highest weight
module labelled by $B_{-m}^m$.  Therefore, we have that $c \cdot
B_{-m}^m$ is defined by \cite[Theorem 1.2]{Bro2} as above.

Next we describe the action of $\tilde C(e)$ on $\cX^+(P)$.
Let $i_1 < \dots < i_d$ be minimal such that $p_{i_1},\dots,p_{i_d}$
are the distinct parts of $\bp = (p_1^2 \ge p_2^2 \ge \dots \ge
p_r^2)$ that are $\phi$-odd. Then we can choose generators
$c_1,\dots,c_d$ for $\tilde C(e) \iso \Z_2^d$ corresponding to
$p_{i_1},\dots,p_{i_d}$.  A lift of the element $c_j$ in $H^e$ fixes
all basis vectors $e_k$, except those where  $\row(k) = \pm i_j$. If
$l$ is in the same column as $k$ with $\row(k) = i_j$ with $\row(l)
= -i_j$, then up to a sign the lift of $c_j$ exchanges $e_k$ and
$e_l$. Explicit formulas for the lift of $c_j$ can be found in
\cite[\S5.3]{BroG}, these can be deduced from the explicit
description of centralizers given in \cite[Section 3]{Ja}.

Let $j = 1,\dots,d$, below we give the action of $c_j$.
Let $B \in \cX_\phi^+(P)$ and let $\sigma \in W_m$ such that $B \in
\sTab^+_\phi(\sigma \cdot P)$. Let $\tau \in \bar S_m$ be the
permutation
$$
\tau = \bar s_m \bar s_{m-1} \dots \bar s_{|\sigma(i_j)|}.
$$
We consider $L_{\tau \sigma}(\tau \star B)$, which is isomorphic to
$L_\sigma(B)$ by Proposition \ref{P:tophalf}.
From the formula
for the lift of $c_j$ given in \cite[\S5.3]{BroG}, we see that $c_j$ is in the subgroup of
$\tilde G$ isomorphic to $\tilde G^\phi_{2p_{\tau\sigma(1)}}$ corresponding to the direct
summand $\g^\phi_{2p_{\tau\sigma(1)}}$ of $\g^{\s_{\tau\sigma}}$.
Therefore, by \cite[Lemma 3.15]{BroG} and \cite[Theorem 6.1]{Bro2} we have that
$$
c_j \cdot [L_{\tau\sigma}(\tau \star B)] =  [L_{\tau\sigma}(c \cdot
(\tau \star B))].
$$
This leads us to define
\begin{equation} \label{e:cjact}
c_j \cdot B = \tau^{-1} \star (c \cdot (\tau \star B)).
\end{equation}
Then by Proposition \ref{P:tophalf} we obtain.

\begin{Proposition} \label{P:clact}
In the notation given above we have
$$
c_j \cdot [L_{\sigma}(B)] = [L_{\sigma}(\tau^{-1} \star (c \cdot
(\tau \star B))].
$$
\end{Proposition}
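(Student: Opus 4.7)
The plan is to chain together three identifications: one to move into a highest weight theory where the action of $c_j$ is easy to compute, one to perform that computation, and one to return to the original highest weight theory.

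First I would apply Proposition \ref{P:tophalf} with the element $\tau \in \bar S_m$ defined just before the proposition. This gives an isomorphism $L_\sigma(B) \iso L_{\tau\sigma}(\tau \star B)$, and in particular $[L_\sigma(B)] = [L_{\tau\sigma}(\tau \star B)]$ in $\Irr_0 U(\g,e)$. Since the $\tilde C(e)$-action is on isomorphism classes, we get
\[
c_j \cdot [L_\sigma(B)] = c_j \cdot [L_{\tau\sigma}(\tau \star B)].
\]

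Next I would compute the right hand side using the discussion immediately preceding the proposition statement. The point of choosing $\tau = \bar s_m \bar s_{m-1}\cdots \bar s_{|\sigma(i_j)|}$ is precisely that after this row permutation, the rows $\pm m$ of $\tau\sigma \cdot P$ have length $p_{i_j}$, so a lift of $c_j$ sits inside the subgroup of $\tilde G$ isomorphic to $\tilde G^\phi_{2 p_{i_j}}$ attached to the direct summand $\g^\phi_{2p_{i_j}}$ of $\g^{\s_{\tau\sigma}}$. Invoking \cite[Lemma 3.15]{BroG} (which relates the component group action for $U(\g,e)$ with that for the Levi $U(\g^\s,e)$ via the parabolic induction $L_\s$) and \cite[Theorem 6.1]{Bro2} (which identifies the component group action on the rectangular tensor factor with the $c$-operation on the s-table $B^m_{-m}$), we obtain
\[
c_j \cdot [L_{\tau\sigma}(\tau \star B)] = [L_{\tau\sigma}(c \cdot (\tau \star B))],
\]
as asserted in the text preceding \eqref{e:cjact}.

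Finally I would apply Proposition \ref{P:tophalf} a second time, now with $\tau^{-1} \in \bar S_m$, to transport the result back to the highest weight theory associated to $\q_\sigma$. This yields
\[
[L_{\tau\sigma}(c \cdot (\tau \star B))] = [L_\sigma(\tau^{-1} \star (c \cdot (\tau \star B)))],
\]
and recognising the right hand side as $[L_\sigma(c_j \cdot B)]$ via \eqref{e:cjact} completes the proof. The only subtle point is the second application of Proposition \ref{P:tophalf}: one has to check that $c \cdot (\tau \star B)$ lies in $\sTab^+_\phi(\tau\sigma \cdot P)$ so that the hypotheses are met, but this is guaranteed since the $c$-action preserves finite dimensionality (it sends $[L_{\tau\sigma}(\tau \star B)]$ to another class in $\Irr_0 U(\g,e)$). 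No step requires new machinery beyond what is already set up; the work is really in verifying that $\tau$ was chosen so that $c_j$ acts "inside" the right rectangular block, which is exactly the content of the preceding paragraph in the excerpt.
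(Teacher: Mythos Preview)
Your proposal is correct and follows exactly the paper's approach: the paper also uses Proposition~\ref{P:tophalf} to pass to $\q_{\tau\sigma}$, invokes the displayed identity $c_j \cdot [L_{\tau\sigma}(\tau \star B)] = [L_{\tau\sigma}(c \cdot (\tau \star B))]$ established just before \eqref{e:cjact} (via \cite[Lemma 3.15]{BroG} and \cite[Theorem 6.1]{Bro2}), and then applies Proposition~\ref{P:tophalf} once more with $\tau^{-1}$ to return to $\q_\sigma$. Your observation that the second application requires $c \cdot (\tau \star B) \in \sTab^+_\phi(\tau\sigma \cdot P)$, and that this follows from finite-dimensionality being preserved by the $\tilde C(e)$-action, is a nice point that the paper leaves implicit.
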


The following is an immediate consequence of Proposition
\ref{P:clact}.

\begin{Corollary}
The operation of the elements of $\tilde C(e)$ on $\cX^+_\phi(P)$ is
a $\tilde C(e)$ group action.
\end{Corollary}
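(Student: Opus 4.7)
The plan is to transport the group-action property from the genuine $\tilde C(e)$-action on $\Irr_0 U(\g,e)$ established in \S\ref{ss:compgrp} back to the combinatorial set $\cX^+_\phi(P)$, using Proposition \ref{P:clact} as the bridge. Since $\tilde C(e) \iso \Z_2^d$ is generated by the involutions $c_1,\dots,c_d$ which pairwise commute, it suffices to verify, for all $B \in \cX^+_\phi(P)$ and all indices $i,j$, the two combinatorial identities $c_j^2 \cdot B = B$ and $c_i \cdot (c_j \cdot B) = c_j \cdot (c_i \cdot B)$.

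First I would check that $c_j\cdot$ preserves the frame: if $B \in \sTab^+_\phi(\sigma \cdot P)$ then $c_j \cdot B \in \sTab^+_\phi(\sigma \cdot P)$. This is immediate from \eqref{e:cjact}, because $\tau \star$ and its inverse carry $\sTab^+_\phi(\sigma \cdot P)$ to $\sTab^+_\phi(\tau\sigma \cdot P)$ and back by Proposition \ref{P:tophalf}, while the intermediate operation $c$ only exchanges entries between rows $\pm m$ and so preserves the frame. Consequently $c_j^2 \cdot B$, $c_i \cdot (c_j \cdot B)$ and $c_j \cdot (c_i \cdot B)$ all belong to the same set $\sTab^+_\phi(\sigma \cdot P)$ as $B$, so direct comparison makes sense.

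Next, applying Proposition \ref{P:clact} twice and using that $\tilde C(e)$ is an abelian group with $c_j^2 = 1$ yields
\[
[L_\sigma(c_j^2 \cdot B)] = c_j^2 \cdot [L_\sigma(B)] = [L_\sigma(B)] \quad \text{and} \quad [L_\sigma(c_i \cdot (c_j \cdot B))] = [L_\sigma(c_j \cdot (c_i \cdot B))].
\]
The only step requiring genuine care is the injectivity of the labeling $B \mapsto [L_\sigma(B)]$ on $\sTab^+_\phi(\sigma \cdot P)$; this is the step I expect to be the main (though modest) obstacle. I would argue as follows: each row-equivalence class of s-tables has a unique representative in $\sTab^+_\phi(\sigma \cdot P)$ (namely the one with rows weakly increasing), so distinct s-tables in this set yield distinct $W_0$-orbits $\Lambda_{B,\sigma}$, and therefore distinct irreducible heads $L(\Lambda_{B,\sigma},\q_\sigma)$ by the uniqueness part of the highest weight theory recalled in \S\ref{ss:hwthy}. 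Once injectivity is established, the two displayed equalities above force $c_j^2 \cdot B = B$ and $c_i \cdot (c_j \cdot B) = c_j \cdot (c_i \cdot B)$, completing the verification that the operations $c_1,\dots,c_d$ on $\cX^+_\phi(P)$ assemble into a genuine $\tilde C(e)$-action.
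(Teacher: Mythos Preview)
Your proposal is correct and is exactly the argument the paper has in mind: the Corollary is stated as an immediate consequence of Proposition \ref{P:clact}, and you have simply unpacked that immediacy by transporting the genuine $\tilde C(e)$-action on $\Irr_0 U(\g,e)$ back along the injective labeling $B \mapsto [L_\sigma(B)]$. The injectivity step you flag as the main care point is precisely the content of the uniqueness statement in \S\ref{ss:hwthy}, so nothing further is needed.
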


We refer the reader to \cite[\S5.3]{BroG} for
some examples of applications of the operators $c_j$.

\begin{Remark} \label{R:altcomp}
We chose $i_j$ to be minimal for definiteness. Let $i_j'$ be such
that $p_{i_j'} = p_{i_j}$.  Then there is a lift of $c_j$ which
acts in the way described above except with $i_j'$ in place of
$i_j$.  The arguments above all go through with $i_j'$ in place
of $i_j$, so we could define $\tau$ with $i_j'$ in place of $i_j$,
and obtain an alternative formula for the action of $c_j$ on
$\cX_\phi^+(P)$  to that in \eqref{e:cjact}.
\end{Remark}

\subsection{The classification of finite dimensional irreducible $U(\g,e)$-modules}

Now that we have described the component group action we can state
classification of finite dimensional irreducible $U(\g,e)$-modules
with integral central character from \cite{BroG}.

\begin{Theorem}[{\cite[Theorem 5.13]{BroG}}] \label{T:BroGmain}
Let $A \in \sTab^\leq_\phi(P)$.  Then the $U(\g,e)$-module $L(A)$ is
finite dimensional if and only if $A$ is $\tilde C(e)$-conjugate to
a table that is justified row equivalent to column strict.
\end{Theorem}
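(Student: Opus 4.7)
The plan is to reduce Theorem \ref{T:BroGmain} to two cases already established: the rectangular even-multiplicity case from \cite[Theorem 1.2]{Bro2} and the type $A$ case of Theorem \ref{T:classfdA}. The bridge between these and the general statement is the Levi-type decomposition encoded in the transitivity identity \eqref{e:translevi}, together with the $\tilde C(e)$-action formula \eqref{e:cjact}, and the argument proceeds by induction on the number of distinct parts of $\bp$.

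For the ``if'' direction, Proposition \ref{P:clact} guarantees that the $\tilde C(e)$-orbit of $A$ in $\cX_\phi^+(P)$ lies uniformly in either the finite-dimensional locus or its complement, so one may replace $A$ by a representative that is literally justified row equivalent to column strict. Next, choose a full subalgebra $\s \sub \t^e$ whose centralizer $\g^\s$ decomposes as a direct sum of a $\gl$-factor (handling all rows of a single length that are not at the centre of $P$) and a classical even-multiplicity factor of strictly smaller rank. Correspondingly $U(\g^\s,e)$ factors as a tensor product of a type $A$ finite $W$-algebra and a smaller classical even-multiplicity finite $W$-algebra, and $L^\s(\Lambda_A,\q)$ is the corresponding tensor product of highest weight modules (up to a uniform per-row central shift that preserves the column-strict condition). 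Theorem \ref{T:classfdA} shows the type $A$ factor is finite dimensional, the inductive hypothesis (with base \cite[Theorem 1.2]{Bro2} for a single distinct part size) handles the smaller classical factor, and \eqref{e:translevi} then lifts the conclusion to $L(A)$.

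For the ``only if'' direction, assume $L(A)$ is finite dimensional; by the same transitivity, each tensor factor of $L^\s(\Lambda_A,\q)$ is finite dimensional. Theorem \ref{T:classfdA} forces the type $A$ piece of $A$ to be justified row equivalent to column strict, while the inductive hypothesis combined with the $\sharp$-element description of \S\ref{ss:comp} provides a $\tilde C(e)$-conjugate of the classical piece that is justified row equivalent to column strict. Applying \eqref{e:cjact} then transports these local conjugations into a single global $\tilde C(e)$-conjugation of $A$ of the required form.

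The main obstacle will be the bookkeeping for the component group action: each generator $c_j$ of $\tilde C(e)$ corresponds to a specific $\phi$-odd part of $\bp$, and its action on $A$ must be correctly located inside the appropriate classical tensorand of some $U(\g^{\s_{\tau\sigma}},e)$ after a suitable row permutation $\tau \in \bar S_m$. This is precisely the content of Proposition \ref{P:clact} together with \cite[Lemma 3.15]{BroG}; combined with \cite[Theorem 1.2]{Bro2} as the base of the induction, this yields Theorem \ref{T:BroGmain}.
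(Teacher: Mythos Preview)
The paper does not prove Theorem \ref{T:BroGmain}; it is quoted from \cite[Theorem 5.13]{BroG} as an external input, with no argument supplied here. So there is no proof in the present paper against which to compare your proposal.

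Independently of that, your ``if'' direction has a real gap. The transitivity statement \eqref{e:translevi} is one-way: it \emph{assumes} $L(\Lambda,\q)$ is finite dimensional and then concludes that $L^\s(\Lambda,\q)$ is finite dimensional and that parabolic induction recovers $L(\Lambda,\q)$. You are invoking it in the opposite direction, inferring finite-dimensionality of $L(A)$ from that of the tensor factors of $L^\s(\Lambda_A,\q)$. But $L_\s(V,\q)$ for a finite-dimensional $U(\g^\s,e)$-module $V$ is not in general finite dimensional --- this is exactly the hard direction, analogous to asking which generalized Verma modules for $U(\g)$ have finite-dimensional irreducible quotients. Nothing in \eqref{e:translevi}, in \cite[Proposition 3.6]{BroG}, or in the rectangular/type $A$ base cases gives you that lift. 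The argument in \cite{BroG} for the ``if'' direction does not proceed by such an inductive parabolic lift; it runs through the primitive-ideal side via Losev's surjection \eqref{e:dagger} and \eqref{e:hwdagger}, using the Robinson--Schensted combinatorics (cf.\ \cite{Jo1}) to match tables to primitive ideals with associated variety $\overline{G\cdot e}$, and then a counting/exhaustion argument shows that the column-strict tables and their $\tilde C(e)$-translates already account for all finite-dimensional irreducibles with integral central character. Your ``only if'' direction, by contrast, is on firmer ground, since that is the direction in which \eqref{e:translevi} genuinely applies.
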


\begin{Remark}
In case all parts of $\bp$ have the same parity, then $P$ is
justified, thus there is a natural notion of $A \in
\sTab^{\le}_\phi(P)$ being row equivalent to column strict as an
s-table.  By \cite[Lemma 4.13]{BroG} this is equivalent to being row
equivalent to column strict in the not skew-symmetric sense.
\end{Remark}

\subsection{The restricted Weyl group}
In this subsection we explain how to change highest weight theories using
elements of the restricted Weyl group $\tilde W^e$ as in \S\ref{ss:rest}.

First we recall the structure of the restricted Weyl group $\tilde
W^e$ from \cite[Sections 4, 6 and 7]{BruG}.  For $i = 1,\dots,m$, we
let $\bar m_i$ be the multiplicity of $i$ in $\bp$.  Then $\tilde
W^e$ is the subgroup of $W_m$ consisting of permutations of $\{\pm
1,\dots, \pm m\}$ that permute numbers labelling rows in $P$ of
equal length; so $W^e \iso W_{\bar m_1} \times \dots \times W_{\bar
m_m}$. We note that in \cite{BruG} only the group $W^e$ is
considered, but it is straightforward to deduce our assertions for
$\tilde W^e$. For $k = 1,\dots,m$ we let $r_k = (k,-k) \in \tilde
W^e$.

Recall the subgroups $Z^e$ and $(\tilde W^e)^\circ$ of $\tilde W^e$ from \S\ref{ss:rest}.
As explained in \cite[Section 4]{BruG}, we have $Z^e$ is isomorphic
to $\Z_2^d$, where as before $d$ is the number of $\phi$-odd parts of $\bp$.
Let $i_1 < \dots < i_d$ be as in \S\ref{ss:comp}, then one can easily
calculate that $Z^e$ is generated by the elements $r_{i_j} \in \tilde W^e$.
Also we have that $(\tilde W^e)^\circ$ is the subgroup $W'_{\bar m_1} \times \dots \times W'_{\bar m_m}$
where $W'_{\bar m_i}$ is the
Weyl group of type $D_{\bar m_i}$ if $i$ is $\phi$-odd and $W'_{\bar m_i} = W_{\bar m_i}$ if
$i$ is $\phi$-even.

Let $k \in \{1,\dots,m\}$.  We can write $r_k = z_kv_k$, where $z_k
\in Z^e$ and $v_k \in (\tilde W^e)^\circ$ as in \S\ref{ss:rest}.  If
$p_k$ is $\phi$-odd, then we let $j$ be such that $p_k = p_{i_j}$
and we see that $z_k = r_{i_j}$.  Further, recalling the maps
$\iota$ and $\kappa$ from \S\ref{ss:rest}, we have that $\iota(z_k)
= \kappa(c_j)$.  If $p_k$ is $\phi$-even, then we see that $z_k =
1$.

All the assertions above can be verified with the explicit
descriptions of centralizers given in \cite[Section 3]{Ja}.

Let $\sigma \in W_m$ and $B \in \sTab^+_\phi(\sigma \cdot P)$. By
applying Proposition \ref{P:restchange}, where we consider $r_k \in
\tilde W^e$ acting on $L_\sigma(B)$, we obtain
\begin{equation} \label{e:restactclass}
[L_\sigma(B)] = \begin{cases} [L_{r_{\sigma^{-1}(k)} \sigma}(c_j \cdot B)] & \text{if $p_k$ is $\phi$-odd, where $j$ is such that $p_k = p_{i_j}$}; \\
[L_{r_{\sigma^{-1}(k)}\sigma}(B)] & \text{if $p_k$ is $\phi$-even}.
\end{cases}
\end{equation}

\subsection{Changing highest weight theories for $U(\g,e)$}
\label{ss:change} We are now in a position to explain how to change
highest weight theories in general. To do this we extend the action
of $\bar S_m$ on $\cX_\phi^+(P)$ to an action of $W_m$.  The
important step in doing this is to define the action of $r = (m,-m)
\in W_m$.  This can be done in terms of the restricted Weyl group as
in the previous subsection.

Let $\sigma \in W_m$ and let $B \in \sTab_\phi^+(\sigma \cdot P)$.
By \eqref{e:restactclass} for $k = \sigma(m)$ we have
$$
[L_\sigma(B)] = \begin{cases}  L_{r\sigma}(c_j \cdot B) & \text{if $p_k$ is $\phi$-odd, where $j$ is such that $p_k = p_{i_j}$;} \\
L_{r\sigma}(B) & \text{if $p_k$ is $\phi$-even}.
\end{cases}
$$
Using Remark~\ref{R:altcomp} we see that in both cases this says
that
$$
[L_\sigma(B)] = [L_{r\sigma}(c \cdot B)],
$$
where the operation of $c$ is defined in \S\ref{ss:comp}.  Thus we define
the star action of $r$ on $\cX_\phi^+$ by
$$
r \star B = c \cdot B.
$$

We can now extend the $\star$-actions of $\bar S_m$ and $r$ on
$\cX^+$ to a $\star$-action of $W_m$ similarly to in the type $A$ case
as in \eqref{e:extend}. Then we get the following analogue of
Theorem \ref{T:hwA}.

\begin{Theorem} \label{T:changehwevenmult} $ $
\begin{enumerate}
\item[(i)]  The $\star$-action of $W_m$ on $\cX^+_\phi(P)$ is well defined.
\item[(ii)]
Let $\sigma,\tau \in W_m$, and $\bar B \in \sTab^+_\phi(\sigma \cdot
F)$, $\bar B' \in \sTab^+_\phi(\tau\sigma \cdot P)$. Then
$L_\sigma(\bar B) \iso L_{\tau\sigma}(\bar B')$ if and only if $B' =
\tau \star \bar B$. \end{enumerate}
\end{Theorem}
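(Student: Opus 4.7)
The plan is to model the proof on that of Theorem \ref{T:hwA}, leveraging the two main building blocks already in place: Proposition \ref{P:tophalf} for the subgroup $\bar S_m \sub W_m$, and the discussion in \S\ref{ss:change} for the extra generator $r = (m,-m)$, which is derived from the restricted Weyl group calculation in \eqref{e:restactclass} combined with Remark \ref{R:altcomp}.

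First I would handle the generators. Fix $\sigma \in W_m$ and $B \in \sTab_\phi^+(\sigma \cdot P)$. For any simple reflection $\bar s_k$ with $1 \le k \le m-1$, Proposition \ref{P:tophalf} gives both that $\bar s_k \star B$ is defined and lies in $\sTab_\phi^+(\bar s_k \sigma \cdot P)$, and that $L_\sigma(B) \iso L_{\bar s_k \sigma}(\bar s_k \star B)$. For the generator $r$, the combination of \eqref{e:restactclass} with Remark \ref{R:altcomp} (to cover both the $\phi$-odd and $\phi$-even case uniformly in terms of the action of $c$) shows that $r \star B = c \cdot B$ is defined, lies in $\sTab_\phi^+(r \sigma \cdot P)$, and satisfies $[L_\sigma(B)] = [L_{r\sigma}(r \star B)]$.

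Next I would prove the ``if'' direction of (ii) together with a preliminary form of (i) by induction on the length of an expression $\tau = g_{i_1} g_{i_2} \dots g_{i_l}$, where each $g_{i_j}$ is one of the generators $r, \bar s_1, \dots, \bar s_{m-1}$ of $W_m$. Applying the one-step results iteratively, $\tau \star B := g_{i_1} \star (g_{i_2} \star (\dots (g_{i_l} \star B) \dots))$ is defined at every stage, belongs to $\sTab_\phi^+(\tau\sigma \cdot P)$, and $L_\sigma(B) \iso L_{\tau\sigma}(\tau \star B)$.

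To finish (i), I use the uniqueness of the parameterization of finite dimensional irreducible $U(\g,e)$-modules with respect to any fixed parabolic subalgebra: for any two expressions of $\tau$ as a product of generators, the resulting s-tables in $\sTab_\phi^+(\tau\sigma \cdot P)$ both label the same isomorphism class $[L_\sigma(B)]$, and since the map $B'' \mapsto [L_{\tau\sigma}(B'')]$ from $\sTab_\phi^+(\tau\sigma \cdot P)$ to $\Irr_0 U(\g,e)$ is injective, the two s-tables coincide. Thus $\tau \star B$ depends only on $\tau$, not on the chosen expression, which is exactly the well-definedness assertion of (i). The ``only if'' direction of (ii) then falls out of the same injectivity: if $L_\sigma(B) \iso L_{\tau\sigma}(B')$, then $L_{\tau\sigma}(B') \iso L_{\tau\sigma}(\tau \star B)$, so $B' = \tau \star B$.

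The main obstacle, as in the type A case, is entirely encapsulated in the one-step results for the generators: the bookkeeping for $\bar s_k$ is provided by Proposition \ref{P:tophalf}, while the trickier point is the identification $r \star B = c \cdot B$. What makes this work is precisely Remark \ref{R:altcomp}, which frees us from the minimality choice of $i_j$ in the description of $c_j$ and lets the $\phi$-odd case of \eqref{e:restactclass} be rewritten uniformly in terms of $c$; in the $\phi$-even case the action of $c$ is trivial on the relevant rows by definition, so the two cases collapse. Once this identification is in place, the rest is the Coxeter-generator induction already used for Theorem \ref{T:hwA}, combined with the injectivity of the highest weight parameterization.
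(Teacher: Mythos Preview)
Your proposal is correct and follows essentially the same approach as the paper: the paper does not give an explicit proof of Theorem \ref{T:changehwevenmult}, but sets everything up in \S\ref{ss:change} so that the argument is the obvious analogue of the proof of Theorem \ref{T:hwA}, using Proposition \ref{P:tophalf} for the generators $\bar s_k$ and the identification $r \star B = c \cdot B$ (via \eqref{e:restactclass} and Remark \ref{R:altcomp}) for $r$, followed by the same induction-plus-injectivity argument you describe.
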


We demonstrate Theorem \ref{T:changehwevenmult} with an example for
$\g = \sp_{10}$.  We take $A$ as in \eqref{e:stableex} and $\tau =
(1,-2)(2,-1) = r \bar s_1 r$. To calculate $\tau \star A$ we first
calculate $r \star A$.  Since the length of row $1$ of $A$ is
$\phi$-even, we see that $r \star A = A$.  Next we calculate $\bar
s_1 \star A$ using the row swapping operation and we get
$$
    \bar s_1 \star A =
    \begin{array}{c}
\begin{picture}(60,80)
\put(0,0){\line(1,0){60}} \put(0,20){\line(1,0){60}}
\put(10,40){\line(1,0){40}} \put(0,60){\line(1,0){60}}
\put(0,80){\line(1,0){60}} \put(0,0){\line(0,1){20}}
\put(20,0){\line(0,1){20}} \put(40,0){\line(0,1){20}}
\put(60,0){\line(0,1){20}} \put(10,20){\line(0,1){40}}
\put(30,20){\line(0,1){40}} \put(50,20){\line(0,1){40}}
\put(0,60){\line(0,1){20}} \put(20,60){\line(0,1){20}}
\put(40,60){\line(0,1){20}} \put(60,60){\line(0,1){20}}
\put(30,40){\circle*{3}} \put(8,70){\makebox(0,0){{-3}}}
\put(28,70){\makebox(0,0){{-2}}} \put(50,70){\makebox(0,0){{7}}}
\put(20,50){\makebox(0,0){{1}}} \put(40,50){\makebox(0,0){{4}}}
\put(18,30){\makebox(0,0){{-4}}} \put(38,30){\makebox(0,0){{-1}}}
\put(8,10){\makebox(0,0){{-7}}} \put(30,10){\makebox(0,0){{2}}}
\put(50,10){\makebox(0,0){{3}}}
\end{picture}
\end{array}
$$
To finish off we have to apply $r \star$ to $\bar s_1 \star A$, which
means applying the operation of $c$ and gives
$$
    \tau \star A =
    \begin{array}{c}
\begin{picture}(60,80)
\put(0,0){\line(1,0){60}} \put(0,20){\line(1,0){60}}
\put(10,40){\line(1,0){40}} \put(0,60){\line(1,0){60}}
\put(0,80){\line(1,0){60}} \put(0,0){\line(0,1){20}}
\put(20,0){\line(0,1){20}} \put(40,0){\line(0,1){20}}
\put(60,0){\line(0,1){20}} \put(10,20){\line(0,1){40}}
\put(30,20){\line(0,1){40}} \put(50,20){\line(0,1){40}}
\put(0,60){\line(0,1){20}} \put(20,60){\line(0,1){20}}
\put(40,60){\line(0,1){20}} \put(60,60){\line(0,1){20}}
\put(30,40){\circle*{3}} \put(8,70){\makebox(0,0){{-3}}}
\put(28,70){\makebox(0,0){{-2}}} \put(50,70){\makebox(0,0){{7}}}
\put(18,50){\makebox(0,0){{-4}}} \put(40,50){\makebox(0,0){{1}}}
\put(18,30){\makebox(0,0){{-1}}} \put(40,30){\makebox(0,0){{4}}}
\put(8,10){\makebox(0,0){{-7}}} \put(30,10){\makebox(0,0){{2}}}
\put(50,10){\makebox(0,0){{3}}}
\end{picture}
\end{array}
$$

\begin{Remark}
In case all parts of $\bp$ are equal we say that $e$ is a
rectangular nilpotent element. In this case classification of finite
dimensional $U(\g,e)$-modules given in \cite{Bro2} does not have the
restriction to integral central characters.    If $\bp$ has an even
number of parts it is easy to see that Theorem
\ref{T:changehwevenmult} holds in this case without the restriction
to integral central characters. We note that in this case the action
of each $\bar s_i$ is trivial, as explained in Remark \ref{R:restA},
and the action of $r$ is given by $c$.  This is explained by the
fact that all possible choices of parabolic subalgebras can be
attained using the action of $\tilde W^e$.

When $\bp$ has an odd number of parts, it is still the case that all possible choices of
parabolic subalgebras can be attained using the action of $\tilde W^e$.  Therefore,
as the component group is trivial in this case, we see that all changes of highest weight
theory are given by a trivial action on s-tables.
\end{Remark}

\end{document}